\newtheorem{lem}{Lemma}
\newtheorem{thm}{Theorem}
\newcommand{\CP}{\mathbb{CP}^1}
\newcommand{\ga}{\gamma}
\newcommand{\kahler}{K\"ahler }
\renewcommand{\d}{\partial}
\newcommand{\dbar}{\bar\partial}
\newcommand{\eh}{e^{-\frac{n\phi}2}}
\newcommand{\R}{\mathbb{R}}
\newcommand{\inv}{^{-1}}
\newcommand{\hn}{H^0(M,L^n)}
\newcommand{\hnp}{H_p^0(M,L^n)}
\newcommand{\hnq}{H_q^0(M,L^n)}
\newtheorem{rem}{Remark}
\newcommand{\e}{\mathbf{E}}
\newcommand{\De}{\Delta}
\newcommand{\C}{\mathbb{C}}
\begin{document}
\title[Conditional expectations]{Conditional expectations  of  random holomorphic fields on Riemann surfaces }
\author[Renjie Feng]{Renjie Feng}
\address{Beijing International Center for Mathematical Research, Peking University, Beijing, China}
\email{renjie@math.pku.edu.cn}

\date{\today}
   \maketitle
   \begin{abstract}

 We study two conditional expectations:  $K_n(z| p)$ of the expected density of critical points of Gaussian random holomorphic sections $s_n \in\hn$ of powers of a positive holomorphic line bundle $(L,h)$ over Riemann surfaces $(M,\omega)$ given that the random sections vanish at a point and $D_n(z|q)$ of the expected density of zeros given that the random sections has a fixed critical point.  The critical points are points $\nabla_{h^n} s_n=0$ where $\nabla_{h^n}$ is the smooth Chern connection of the Hermitian metric $h^n$.  The main result is that the rescaling conditional expectations $K_n(p+\frac u{\sqrt n}|p)$ and $D_n(q+\frac u{\sqrt n}|q)$ have  universal  limits $K_\infty(u|0)$ and $D_\infty(v|0)$ as the power of the line bundle tends to infinity. We will see that the short distance behaviors of these two conditional expectations are quite different: the behavior between critical points and the conditioning zero is neutral while there is a repulsion between zeros and the conditioning critical point. But the long distance behaviors of these two rescaling densities are the same.

   \end{abstract}

\section{Introduction}
For Gaussian random polynomials of degree $n$, we study the conditional expectation of  critical points given that the polynomials vanish at a point and the conditional expectation of zeros given that the polynomials have a fixed critical point. More generally, we consider
the conditional distribution $K_n(z| p)$  of critical points  of Gaussian random holomorphic sections
$s_n \in\hn$ of powers of a positive holomorphic line bundle $(L,h)$ over Riemann surface $(M,\omega)$  given that the random sections vanish at a point $p$ and the conditional expectation $D_n(z|q)$  of zeros  given that the Chern connection of the random sections vanish at a point $q$.  We will  apply a Kac-Rice type formula to derive
 $ K_n(z|p)$ and the probabilistic Poincar\'{e}-Lelong formula to derive $D_n(z|q)$, then we rescale them to prove that  $K_n(p+\frac u{\sqrt n}|p)$ and $D_n(q+\frac u{\sqrt n}|q)$ have  universal limits as $n$ tends to infinity.

The motivation of this paper is to study the local behavior between the critical points and zeros of random holomorphic fields. The famous Gauss-Lucas Theorem states that the holomorphic critical points of any polynomial are contained in the convex hull of its zeros.  This implies that some non-trivial correlations between zeros and critical points of
 random polynomials may exist.  This problem has been studied recently in \cite{Hann1, Hann2} for Gaussian random $SU(2)$ polynomials where
  a two-point correlation function between zeros and critical points is derived. It is also proved that on
the $n^{-\frac 12}$ length-scaled,  zeros and  critical points appear
in rigid pairs. It seems that the similar results hold for holomorphic sections of line bundles over Riemann surfaces. In this article, we study the analogous problems.  Instead of two-point correlation between zeros and critical points, we study the
 conditional expectations of critical points and zeros. Our essential setting on Riemann surfaces is that the critical points are defined as zeros of the derivative of the smooth Chern connection $\nabla_{h^n}$ instead of
 the meromorphic connection (or locally, the holomorphic derivative $\frac{\partial}{\partial z}$
on a coordinate patch $\C$ of Riemann surfaces).  


\subsection{Results on critical points}
To state our results, we need to recall the definition of Gaussian random holomorphic sections of a line bundle (see \S \ref{bg}).
 We let $(L,h)\to (M,\omega)$ be a positive Hermitian holomorphic line bundle
over a Riemann surface with the \kahler form $\omega=\frac{\sqrt{-1}}{2}\Theta_h$, where $\Theta_h$ is the curvature of $h$. We denote
$H^0(M,L^n)$ as the space of global holomorphic sections of $n$-th tensor power of $L$.
A special case is when $M=\CP$ and $L=\mathcal O(1)$ the hyperplane line bundle, $H^0(\CP, \mathcal O(n))$ is the space of homogeneous polynomials of degree $n$. The Hermitian metric $h$ will induce an inner product on $H^0(M, L^n)$
and thus induces a Gaussian measure $d\gamma_{d_n}$ on $H^0(M, L^n)$, where $d_n$ is the dimension of  $H^0(M, L^n)$.

 We define $K_n(z|p)$ the conditional expectation of critical points as a $(1,1)$-current
\begin{equation}\label{firstdef}\int_M\psi  K_n(z|p) =\e_{(H^0(M, L^n),d\gamma_{d_n})}\left(\sum_{z: \nabla_{h^n}s_n=0 } \psi(z) |s_n(p)=0\right),\end{equation}
for any test function $\psi\in C_0^\infty(M)$ where $\nabla_{h^n}$ is the Chern connection. In \S\ref{ce}, we will rewrite the right hand side as an expectation taken in the probability space $(H^0_p(M, L^n), d\gamma^p_{d_n-1})$ with respect to the conditional Gaussian measure  $d\gamma^p_{d_n-1}$ (see \S \ref{ce}).

   In order to get the conditional distribution of the critical points, we need to apply the generalized Kac-Rice formula for complex manifolds \cite{BSZ1, DSZ1,DSZ2}. Our first result
 is the following Kac-Rice type formula for the global $(1,1)$-current of $K_n(z|p)$.
\begin{thm}\label{thm1}
Let $(L, h)\to(M,\omega)$ be a positive Hermitian holomorphic line bundle over a compact Riemann surface with the \kahler form $\omega=\frac{\sqrt{-1}}{2}\Theta_h$, let $(H^0(M, L^n),d\gamma_{d_n})$
be a complex Gaussian ensemble defined in \S \ref{Gaussianva}. Then the conditional expectation of the empirical measure of critical points given that the random sections vanish at $p$ is
$$K_n(z|p)=   \left(\int_{\C^2} \frac 1{\pi^{3} }\frac 1{ A_n \det \Lambda_n}\exp\left\{-\left\langle \begin{pmatrix} \xi  \\y\end{pmatrix}, \Lambda_n^{-1}\begin{pmatrix} \bar \xi \\ \bar y \end{pmatrix}\right \rangle\right\}\left| |\xi|^2-n^2   |y|^2 \right|d\ell_yd\ell_\xi\right)\omega(z),$$
where $d\ell_y$ and $d\ell_\xi$ are Lebesgue measures on $\C$ and
$$\Lambda_n=C_n- A_n^{-1}B_n^* B_n$$
where $$A_n=\d_{  z}\d _{\bar w}\Pi_n^p(z,w)|_{z=w},$$
    $$B_n=(\d_{  z}\d^2_{\bar w}\Pi_n^p(z,w)|_{z=w} ,\d_{  z}\Pi_n^p(z,w)|_{z=w}),$$
   and
   $$C_n=\begin{pmatrix}\d_{  z}^2\d_{\bar w}^2\Pi_n^p(z,w)|_{z=w}& \d_{  z}^2\Pi_n^p(z,w)|_{z=w}\\ \d^2_{\bar w}\Pi_n^p(z,w)|_{z=w}&\Pi_n^p(z,z) \end{pmatrix},$$
where
$$\Pi_n^p(z,w)=\Pi_n(z,w)-\frac{\Pi_n(z,p)\overline{\Pi_n(w,p)}}{\Pi_n(p,p)}, $$
 where $\Pi_n(z,w)$ is the Bergman kernel which is the projection of the $L^2$ integral sections to the holomorphic sections (see \S \ref{djdjdjd}).
\end{thm}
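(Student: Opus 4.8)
The plan is to reduce the conditional expectation to a Kac--Rice density for the zeros of the random covariant gradient $\nabla^{1,0}_{h^n}s_n$ computed against the conditional Gaussian measure, and then to read off all the covariance data from diagonal derivatives of the conditional Bergman kernel $\Pi_n^p$. First I would invoke the reduction of \S\ref{ce} to replace the conditioning $\{s_n(p)=0\}$ by an unconditioned expectation over the ensemble $(\hnp,d\gamma^p_{d_n-1})$, whose two-point covariance function is precisely $\Pi_n^p(z,w)$. Working in a local holomorphic frame $e_L$ and a holomorphic coordinate centred at the evaluation point (a preferred \kahler frame, in which the Chern connection and its curvature take their simplest form at the origin), I would write $s_n=f\,e_L^{\otimes n}$ and record the jet vector $(\nabla^{1,0}s_n,\ \nabla^{1,0}\nabla^{1,0}s_n,\ s_n)$, whose components agree at the centre with $(\d_z f,\ \d_z^2 f,\ f)$ up to the metric normalization.

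The critical-point equation $\nabla^{1,0}_{h^n}s_n=0$ cuts out the zero set of a single $\C$-valued section, so the generalized Kac--Rice formula \cite{BSZ1,DSZ1,DSZ2} gives the density as the expectation of the Jacobian $|\det D(\nabla^{1,0}_{h^n}s_n)|$ conditioned on $\nabla^{1,0}_{h^n}s_n=0$, times the density of $\nabla^{1,0}_{h^n}s_n$ at $0$. The essential point is that $\nabla^{1,0}_{h^n}s_n$ is \emph{not} holomorphic: differentiating once more produces a $(1,0)$-part $\nabla^{1,0}\nabla^{1,0}s_n=:\xi$, the holomorphic Hessian, and a $(0,1)$-part which, because $s_n$ is holomorphic and $\nabla^{1,0}s_n=0$ at the critical point, is governed by the commutator $[\nabla^{0,1},\nabla^{1,0}]s_n$, i.e.\ by the curvature of $h^n$ acting on $s_n=:y$. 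As the curvature of $h^n$ is $n$ times that of $h$, this $(0,1)$-part is a fixed multiple of $n\,y$, so the real Jacobian of $z\mapsto\nabla^{1,0}_{h^n}s_n$ collapses to $\bigl||\xi|^2-n^2|y|^2\bigr|$, exactly the factor in the statement.

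It then remains to pin down the Gaussian law of $(\nabla^{1,0}s_n,\xi,y)$ under $d\gamma^p_{d_n-1}$. The covariances are diagonal derivatives of $\Pi_n^p$: the variance of the gradient is $A_n=\d_z\d_{\bar w}\Pi_n^p|_{z=w}$, the joint covariance of $(\xi,y)$ is the matrix $C_n$, and the cross-covariance of the gradient with $(\xi,y)$ is the row $B_n$, matching the theorem's definitions. Conditioning $(\xi,y)$ on $\{\nabla^{1,0}s_n=0\}$ replaces $C_n$ by the Schur complement $\Lambda_n=C_n-A_n^{-1}B_n^{*}B_n$, which supplies the Gaussian weight $\exp\{-\langle(\xi,y),\Lambda_n^{-1}(\bar\xi,\bar y)\rangle\}/(\pi^2\det\Lambda_n)$, while the density of the gradient at $0$ contributes the factor $1/(\pi A_n)$; their product gives the $\pi^{-3}(A_n\det\Lambda_n)^{-1}$ normalization. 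Integrating $||\xi|^2-n^2|y|^2|$ against this conditional density over $(\xi,y)\in\C^2$ and multiplying by $\om(z)$ to turn the scalar density into a $(1,1)$-current assembles the claimed formula.

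The main obstacle I anticipate is the second paragraph: computing the covariant Hessian and tracking the curvature contribution so that the Jacobian is precisely $||\xi|^2-n^2|y|^2|$ rather than $|\xi|^2$ alone. This demands choosing the frame so that ordinary derivatives of $\Pi_n^p$ at the centre genuinely represent covariant derivatives, checking that all cross terms involving $\nabla^{0,1}s_n=\dbar s_n$ drop out by holomorphy, and keeping exact account of the factor $n$ from the curvature of $h^n$ together with the numerical constants, so that $\pi^3$ and the scalings hidden in $A_n,B_n,C_n$ emerge as stated.
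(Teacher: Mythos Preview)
Your proposal is correct and follows essentially the same route as the paper: reduce to the unconditioned ensemble $(\hnp,d\gamma^p_{d_n-1})$ with covariance $\Pi_n^p$, apply Kac--Rice to the covariant gradient in a K\"ahler normal frame, identify the Jacobian as $||\xi|^2-n^2|y|^2|$, and read off the Gaussian data $(A_n,B_n,C_n)$ and Schur complement $\Lambda_n$ from diagonal derivatives of $\Pi_n^p$. The only cosmetic difference is that you explain the $(0,1)$-part of the Hessian via the curvature commutator $[\nabla^{0,1},\nabla^{1,0}]$, whereas the paper computes $\d_{\bar z}q_n$ explicitly for $q_n=(\d f^p_n-n\d\phi\,f^p_n)e^{-n\phi/2}$ and then specializes to normal coordinates where $\d\phi=\d^2\phi=0$ and $\d\dbar\phi=1$; these are the same calculation.
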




We rescale the global expression of $K_n(z|p)$ to get the following local behavior between critical points and the conditioning zero,

\begin{thm}\label{main}
The rescaling limit of the $(1,1)$-current of the conditional expectation has a pointwise universal limit,
$$K_\infty(u|0):=\lim_{n\to\infty} K_n(p+\frac{u}{\sqrt n}|p)=\frac{1}{\pi a_\infty^2}\frac{(\lambda_1^\infty)^2+(\lambda_2^\infty)^2}{|\lambda_1^\infty|+|\lambda_2^\infty|}\frac{\sqrt{-1}}{2}du\wedge d\bar u$$
where
$$a_\infty=1+|u|^2, \,\,\lambda_1^\infty=  2+2|u|^2+|u|^4,\,\,\lambda_2^\infty=  -1+|u|^2e^{-|u|^2}+e^{-|u|^2} .$$
\end{thm}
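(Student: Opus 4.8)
\emph{Step 1: exact evaluation of the Kac--Rice integral.} My first move is to observe that the $\C^2$--integral in Theorem \ref{thm1} can be carried out in closed form, since its integrand is a Gaussian times the \emph{indefinite} quadratic weight $\bigl|\,|\xi|^2-n^2|y|^2\,\bigr|=\bigl|\bar v^{\,T}Q_n v\bigr|$, where $v=(\xi,y)$ and $Q_n=\mathrm{diag}(1,-n^2)$. Recognizing $\tfrac{1}{\pi^2\det\Lambda_n}e^{-\bar v^{\,T}\Lambda_n^{-1}v}$ as a centered complex Gaussian density, the integral is the expectation of $|\bar v^{\,T}Q_n v|$, which depends only on the eigenvalues $\lambda_1^{(n)},\lambda_2^{(n)}$ of the Hermitian matrix $M_n=\Lambda_n^{1/2}Q_n\Lambda_n^{1/2}$. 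By Sylvester's law of inertia $M_n$ has signature $(1,1)$, so $\lambda_1^{(n)}>0>\lambda_2^{(n)}$, and the remaining one--dimensional integral is $\mathbf E\,|\lambda_1^{(n)}S+\lambda_2^{(n)}T|$ for independent unit exponentials $S,T$. An elementary computation gives $\mathbf E\,|aS+bT|=(a^2+b^2)/(|a|+|b|)$ whenever $ab<0$, whence
\[
K_n(z|p)=\frac{1}{\pi A_n}\,\frac{(\lambda_1^{(n)})^2+(\lambda_2^{(n)})^2}{|\lambda_1^{(n)}|+|\lambda_2^{(n)}|}\,\om(z).
\]
This already exhibits the shape of the claimed limit and reduces everything to the asymptotics of $A_n$ and of the eigenvalues $\lambda_i^{(n)}$.

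\emph{Step 2: near--diagonal Bergman kernel asymptotics.} The analytic input is the scaling asymptotics of the Bergman kernel about $p$ (\cite{BSZ1,DSZ1,DSZ2}). In Kähler normal coordinates with a normalized frame so that the local potential is $\phi(z)=|z|^2+O(|z|^4)$, the rescaled kernel $n^{-1}\Pi_n(p+\tfrac u{\sqrt n},p+\tfrac v{\sqrt n})$ and all its covariant derivatives converge, uniformly on compacta, to the Bargmann--Fock model $\Pi_\infty(u,v)=\tfrac1\pi e^{u\bar v-\frac12|u|^2-\frac12|v|^2}$. Because critical points are defined through $\nabla_{h^n}$, the derivative $\d_z$ rescales to the flat covariant derivative $\d_u-\bar u$, which is precisely what attaches the factor $n^2$ to $|y|^2$ in $Q_n$. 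Feeding this into $\Pi_n^p(z,w)=\Pi_n(z,w)-\Pi_n(z,p)\overline{\Pi_n(w,p)}/\Pi_n(p,p)$ and rescaling yields the model conditional kernel $\Pi_\infty^0(u,v)=\tfrac1\pi e^{-\frac12|u|^2-\frac12|v|^2}(e^{u\bar v}-1)$, whose factor $(e^{u\bar v}-1)$ is the fingerprint of the conditioning at $p$ and is the source of the Gaussian terms $e^{-|u|^2}$ in $\lambda_2^\infty$.

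\emph{Step 3: passage to the limit.} I would then compute the scaling limits of $A_n,B_n,C_n$, hence of the Schur complement $\Lambda_n$, from the covariant derivatives of $\Pi_\infty^0$ at $u=v$, tracking the different powers of $n$ carried by entries with different numbers of derivatives. Combined with the $n^2$ inside $Q_n$ and the rescaling $\om(p+\tfrac u{\sqrt n})=\tfrac1n\cdot\tfrac{\sqrt{-1}}2\,du\wedge d\bar u\,(1+o(1))$, the competing powers of $n$ cancel so that, after extracting the appropriate normalization, $A_n\to a_\infty^2$ and the eigenvalues of $M_n$ tend to $\lambda_1^\infty=2+2|u|^2+|u|^4$ and $\lambda_2^\infty=-1+(1+|u|^2)e^{-|u|^2}$, with $a_\infty=1+|u|^2$. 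Substituting into the closed form of Step 1 gives $K_\infty(u|0)$ exactly as stated. Note that $\lambda_1^\infty>0\ge\lambda_2^\infty$ reflects the signature $(1,1)$ forced by $Q_n$, so the absolute values in the denominator are genuinely needed; this is also what makes the short--distance behavior neutral, since $K_\infty(0|0)=\tfrac2\pi\cdot\tfrac{\sqrt{-1}}2\,du\wedge d\bar u$ does not vanish.

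\emph{Main obstacle.} Because Step 1 evaluates the integral exactly for every $n$, the usual difficulty of interchanging a limit with a non--smooth, unbounded integrand disappears; what remains genuinely delicate is Step 3. One must organize the covariant--derivative computation of the conditional kernel so that the many $n$--powers in the entries of $A_n,B_n,C_n$ cancel to leave a finite, nondegenerate limit for $\Lambda_n$, and then extract the exact closed forms of $a_\infty,\lambda_1^\infty,\lambda_2^\infty$ — in particular reproducing the transcendental $e^{-|u|^2}$ contributions arising from the conditioning factor $(e^{u\bar v}-1)$. A secondary point is uniform control (positivity of $\lambda_1^{(n)}$ and boundedness of $A_n$ away from $0$ on compact $u$--sets) so that the pointwise convergence of the closed--form expression is legitimate.
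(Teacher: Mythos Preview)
Your proposal is essentially the paper's own argument: Step~1 is exactly Lemma~\ref{eign} (the closed-form evaluation of the Kac--Rice integral via the eigenvalues of $\Lambda_n^{1/2}Q_n\Lambda_n^{1/2}$), and Steps~2--3 are the content of \S\ref{profooo}, where the near-diagonal Bergman kernel asymptotics are fed into $A_n,B_n,C_n$ and hence into $\Lambda_nQ_n$. The paper additionally works out the $SU(2)$ case first (\S\ref{su2}) as an explicit warm-up before invoking universality, whereas you go straight to the Bargmann--Fock model; and the paper computes with the \emph{holomorphic} local kernel (so the rescaled model is $e^{u\bar v}$ and the derivatives in Theorem~\ref{thm1} are plain $\d_z,\d_{\bar w}$), while you phrase things with the normalized kernel $\tfrac{1}{\pi}e^{u\bar v-\frac12|u|^2-\frac12|v|^2}$ and covariant derivatives $\d_u-\bar u$. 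Both bookkeepings are legitimate, but be sure yours is consistent with the statement of Theorem~\ref{thm1}, which is written in the holomorphic frame.

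One small inaccuracy in Step~3: it is not that ``$A_n\to a_\infty^2$'' after normalization. In fact $A_n(\tfrac{u}{\sqrt n})\sim n^2\,a_\infty\,e^{|u|^2}$ (one power of $a_\infty$), while the eigenvalues of $\Lambda_nQ_n$ at the rescaled point carry a common prefactor $\sim n^2e^{|u|^2}/a_\infty$ in front of $\lambda_i^\infty$; the second power of $a_\infty$ in the denominator of $K_\infty$ comes from this eigenvalue normalization, together with the rescaling $\omega(p+\tfrac{u}{\sqrt n})\sim n^{-1}\tfrac{\sqrt{-1}}{2}du\wedge d\bar u$ absorbing the leftover $n$.
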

We will first prove this result for the special case of Gaussian random $SU(2)$ polynomials in \S \ref{su2} and then prove the general cases in \S \ref{profooo}. To prove this result, we need  the estimates of the rescaling limits of the Bergman kernel $\Pi_n(p+\frac z{\sqrt n},p+ \frac w {\sqrt n})$ and its derivatives up to order $4$.

\subsection{Results on  zeros}
The conditional expectation $D_n(z|q)$ of zeros of Gaussian random holomorphic sections with a fixed critical point is defined similarly,
\begin{equation}\label{seconddef}\int_M\psi  D_n(z|q) =\e_{(H^0(M, L^n),d\gamma_{d_n})}\left(\sum_{z: s_n=0 } \psi(z) |\nabla_{h^n}s_n(q)=0\right)\end{equation}
for any test function $\psi$.

In \S\ref{proofoooo}, we will apply the probabilistic Poincar\'{e}-Lelong formula to get,

\begin{thm}\label{thm2}
With the same assumptions in Theorem \ref{thm1},   the conditional expectation of the empirical measure of zeros given that the random sections have a critical point at $q$ is
$$D_n(z|q)=\frac{\sqrt{-1}}{2 \pi}\partial\bar\partial \log |\Pi_n^q(z,z)|,$$
where
$$\Pi_n^q(z,z)=\Pi_n(z,z)-\frac{|\partial_{\bar w}\Pi_n(z,w)|_{w=q}|^2}{(\partial_z\partial
_{\bar z}\Pi_n)(q,q)}.$$
\end{thm}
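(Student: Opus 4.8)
The plan is to recognize the conditioning ``$\nabla_{h^n}s_n(q)=0$'' as a single complex-linear Gaussian constraint, to reduce the right-hand side of \eqref{seconddef} to the expected zero current of the resulting conditional Gaussian ensemble, and then to apply the probabilistic Poincar\'e--Lelong formula. First I would observe that on a Riemann surface $\nabla_{h^n}s_n(q)$ takes values in the one-complex-dimensional space $(T^{1,0}_qM)^*\otimes L^n_q$, so imposing $\nabla_{h^n}s_n(q)=0$ conditions $s_n$ on the vanishing of one complex Gaussian functional. Exactly as in the critical-point case treated in \S\ref{ce}, the law of $s_n$ under this condition is again a centered complex Gaussian field of holomorphic sections, and the conditional expectation in \eqref{seconddef} is precisely the expected zero current of that conditional field. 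Hence the only object I must identify is its covariance kernel $\Pi_n^q(z,w)$.

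To compute $\Pi_n^q$ I would use Gaussian regression. Writing $s_n=\sum_j a_j S_j^n$ with $\{S_j^n\}$ an orthonormal basis of $\hn$ and $a_j$ i.i.d.\ standard complex Gaussians, the covariance kernel of $s_n$ is the Bergman kernel $\Pi_n(z,w)=\sum_j S_j^n(z)\overline{S_j^n(w)}$, and conditioning on the functional $\nabla_{h^n}s_n(q)$ replaces it by the Schur complement
$$\Pi_n^q(z,w)=\Pi_n(z,w)-\frac{\e\!\left[s_n(z)\,\overline{\nabla_{h^n}s_n(q)}\right]\,\e\!\left[\nabla_{h^n}s_n(q)\,\overline{s_n(w)}\right]}{\e\,|\nabla_{h^n}s_n(q)|^2}.$$
The essential computation is to show that, working in a preferred holomorphic frame and \kahler normal coordinates centered at $q$ in which the Chern connection form and the first derivatives of the potential vanish at $q$, one has $\nabla_{h^n}s_n(q)=\d_z s_n(q)$, so that
$$\e\!\left[s_n(z)\,\overline{\nabla_{h^n}s_n(q)}\right]=\d_{\bar w}\Pi_n(z,w)|_{w=q},\qquad \e\,|\nabla_{h^n}s_n(q)|^2=(\d_z\d_{\bar z}\Pi_n)(q,q).$$
Substituting these and restricting to the diagonal gives exactly
$$\Pi_n^q(z,z)=\Pi_n(z,z)-\frac{|\d_{\bar w}\Pi_n(z,w)|_{w=q}|^2}{(\d_z\d_{\bar z}\Pi_n)(q,q)},$$
the kernel in the statement.

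Finally I would invoke the probabilistic Poincar\'e--Lelong formula: for a centered Gaussian ensemble of holomorphic sections with covariance kernel $\Pi$, the expected zero current equals $\frac{\sqrt{-1}}{2\pi}\ddbar\log\Pi(z,z)$. This follows by applying the pointwise Poincar\'e--Lelong identity $[Z_{s_n}]=\frac{\sqrt{-1}}{2\pi}\ddbar\log|s_n(z)|^2$ to each realization, taking $\e$, and using that for a complex Gaussian $X$ one has $\e\log|X|^2=\log\e|X|^2+c$ with $c$ a universal constant annihilated by $\ddbar$. Applying this to the conditional ensemble, whose covariance is $\Pi_n^q$, yields $D_n(z|q)=\frac{\sqrt{-1}}{2\pi}\ddbar\log|\Pi_n^q(z,z)|$; the absolute value is harmless since $\Pi_n^q(z,z)$ is a nonnegative conditional variance.

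The step I expect to be the main obstacle is the covariance computation for $\nabla_{h^n}s_n(q)$: one must verify that the smooth Chern connection contributes only the holomorphic kernel derivatives $\d_{\bar w}\Pi_n$ and $\d_z\d_{\bar z}\Pi_n$, with no stray curvature or connection terms surviving. This is resolved by the preferred-frame choice at $q$, after which I would check that the answer is intrinsic: under a holomorphic change of frame the local representative of $\Pi_n^q(z,z)$ is multiplied by the modulus squared of a nonvanishing holomorphic factor, so $\frac{\sqrt{-1}}{2\pi}\ddbar\log|\Pi_n^q(z,z)|$ changes only by $\ddbar$ of a pluriharmonic function and is therefore well defined as a current, while the constraint $\nabla_{h^n}s_n(q)=0$ is itself frame-independent.
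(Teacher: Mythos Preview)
Your proposal is correct and follows essentially the same route as the paper: reduce to the conditional Gaussian ensemble on $\hnq$, identify its covariance kernel by subtracting the rank-one piece coming from the single linear constraint $\nabla_{h^n}s_n(q)=0$, simplify via \kahler normal coordinates at $q$ so that the Chern connection at $q$ becomes $\partial_z$, and then apply the probabilistic Poincar\'e--Lelong formula. The only cosmetic difference is packaging: the paper builds the orthogonal complement $\Psi_n^q$ explicitly and uses relation~\eqref{relation} to obtain $\Pi_n^q(z,w)=\Pi_n(z,w)-\Psi_n^q(z)\overline{\Psi_n^q(w)}$, whereas you invoke the Schur-complement (Gaussian regression) formula directly; both computations coincide and both use the same normal-coordinate simplification at $q$.
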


Furthermore, $D_n(z|q)$ admits the following universal limit,

\begin{thm}\label{main2}
$$D_\infty(v|0):=\lim_{n\to\infty}D_n(q+\frac{v}{\sqrt n}|q)=\frac{\sqrt{-1}}{2 \pi}\partial\bar\partial \log \left(e^{|v|^2}-|v|^2\right).$$
\end{thm}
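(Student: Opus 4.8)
The plan is to start from the closed formula in Theorem \ref{thm2},
$$D_n(z|q)=\frac{\sqrt{-1}}{2 \pi}\ddbar \log |\Pi_n^q(z,z)|,$$
and feed into it the near-diagonal scaling asymptotics of the Bergman kernel. Two elementary observations handle most of the bookkeeping. First, writing $z=q+\frac{v}{\sqrt n}$ and applying the chain rule, each holomorphic derivative produces a factor $\sqrt n$ while $dz\wedge d\bar z$ contributes $\frac1n$, so the $(1,1)$-form $\ddbar\log f$ is invariant under the rescaling; it is therefore equivalent to compute $\frac{\sqrt{-1}}{2\pi}\,\d_v\dbar_v\log|\Pi_n^q|$ in the rescaled variable $v$. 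Second, $\ddbar$ annihilates any multiplicative constant inside the logarithm, in particular the $n$-dependent prefactor of the Bergman kernel. It thus suffices to identify the scaling limit of $\Pi_n^q(z,z)$ up to an overall constant and then apply $\frac{\sqrt{-1}}{2\pi}\ddbar\log$.

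Next I would record the inputs. Working in normal coordinates centered at $q$ (so that $q=0$ and the ordinary derivatives appearing in Theorem \ref{thm2} agree at $q$ with the Chern covariant derivative $\nabla_{h^n}$), the standard scaling limit of the Bergman kernel in the preferred holomorphic frame reads
$$\Pi_n(z,w)=\tfrac n\pi e^{nz\bar w}\big(1+o(1)\big),$$
together with the matching expansions of its derivatives. From this I extract the three building blocks of $\Pi_n^q$: the diagonal $\Pi_n(z,z)=\frac n\pi e^{n|z|^2}(1+o(1))$; the first derivative $\d_{\bar w}\Pi_n(z,w)|_{w=q}=\frac{n^2}\pi z\,(1+o(1))$; and the variance of the derivative $\d_z\d_{\bar w}\Pi_n(z,w)|_{z=w=q}=\frac{n^2}\pi(1+o(1))$, which is the correct reading of $(\d_z\d_{\bar z}\Pi_n)(q,q)$ as the diagonal mixed derivative (the diagonal Laplacian itself vanishes to leading order, so a literal reading would be meaningless). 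The Schur-complement structure of $\Pi_n^q$ is precisely the Gaussian conditioning of $s_n(z)$ on $\nabla_{h^n}s_n(q)=0$.

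Assembling these and substituting $z=\frac{v}{\sqrt n}$, so that $n|z|^2=|v|^2$, the diagonal contributes $\frac n\pi e^{n|z|^2}$ while the Schur correction $\frac{|\d_{\bar w}\Pi_n(z,w)|_{w=q}|^2}{\d_z\d_{\bar w}\Pi_n(z,w)|_{z=w=q}}$ equals $\frac{n^2}\pi|z|^2\,(1+o(1))$, so that
$$\Pi_n^q(z,z)=\Big(\frac n\pi e^{n|z|^2}-\frac{n^2}\pi|z|^2\Big)\big(1+o(1)\big)=\frac n\pi\Big(e^{|v|^2}-|v|^2\Big)\big(1+o(1)\big).$$
Applying $\frac{\sqrt{-1}}{2\pi}\ddbar\log$ and discarding the constant $\frac n\pi$ then yields the claimed universal limit $\frac{\sqrt{-1}}{2\pi}\ddbar\log(e^{|v|^2}-|v|^2)$. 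The limit function is strictly positive, since $e^x\ge 1+x>x$ for $x=|v|^2\ge 0$, which justifies removing the absolute value and makes the density smooth; moreover $e^{|v|^2}-|v|^2=1+\tfrac12|v|^4+\cdots$, so the rescaled density vanishes to second order as $v\to 0$, exhibiting the advertised repulsion of zeros away from the conditioning critical point.

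The main obstacle is analytic rather than algebraic: passing $\ddbar\log$ to the limit. Because the density involves two derivatives of $\log|\Pi_n^q|$, I must upgrade the pointwise asymptotics to convergence of $\Pi_n^q$ together with its first two derivatives, uniformly on compact sets in $v$; this requires the derivative estimates for the rescaled Bergman kernel (the same inputs flagged for Theorem \ref{main}, here needed only up to order two) and a proof that the $o(1)$ remainders persist after differentiation. Since the limit $e^{|v|^2}-|v|^2$ is uniformly bounded below on compacta, such $C^2_{\mathrm{loc}}$ convergence of $\Pi_n^q$ transfers to $C^0_{\mathrm{loc}}$ convergence of $\ddbar\log|\Pi_n^q|$, which completes the argument.
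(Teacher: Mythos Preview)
Your proposal is correct and follows essentially the same route as the paper: insert the near-diagonal Bergman kernel asymptotics into the formula of Theorem~\ref{thm2} to obtain $\Pi_n^q(z,z)\sim n\bigl(e^{|v|^2}-|v|^2\bigr)$ (up to normalization) and then apply $\frac{\sqrt{-1}}{2\pi}\ddbar\log$. Your extra remarks on the scale-invariance of $\ddbar\log$, the positivity of $e^{|v|^2}-|v|^2$, and the need for $C^2_{\mathrm{loc}}$ control of the remainder are all to the point and in fact make the limiting step more explicit than the paper does. One small quibble: your parenthetical that ``the diagonal Laplacian itself vanishes to leading order'' is not quite right---since $\Pi_n(z,z)=\sum_j|f_{n,j}(z)|^2$, applying $\d_z\d_{\bar z}$ to this diagonal function gives exactly $\sum_j|\d_z f_{n,j}|^2=\d_z\d_{\bar w}\Pi_n|_{z=w}$, so the two readings coincide; but this does not affect your computation, which uses the correct value.
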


 Theorem \ref{main} and Theorem \ref{main2} indicate that these two universal limits depend only on the distance $r=|u|$ or $|v|$ between the scaled point and the conditioning fixed point. The following graphs illustrate the growth of the density functions $K_\infty(u|0)$ and $D_\infty(v|0)$ (by discarding the Lebesgue measure).

\begin{center} \label{kg}
\includegraphics[scale=0.35]{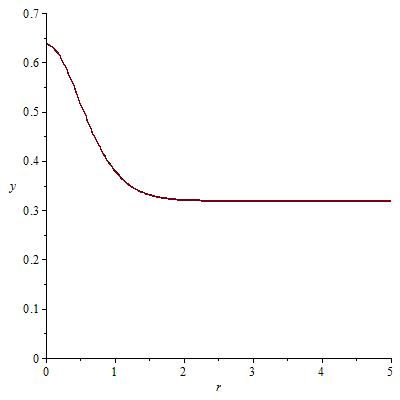}\par\text{Graph of  $K_\infty(u|0)$}
\end{center}

\begin{center} \label{kg}
\includegraphics[scale=0.35]{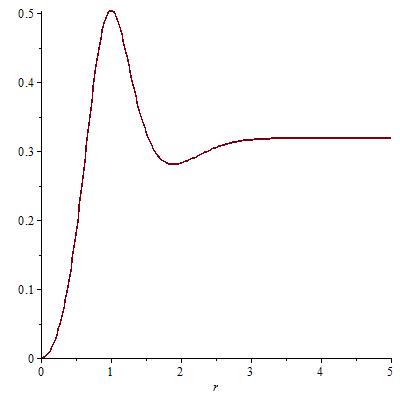}\par\text{Graph of  $D_\infty(v|0)$}
\end{center}

Theorem \ref{main} and Theorem \ref{main2} also determine some local behaviors
between critical points and  zeros. It's very surprising to see that the short distance behaviors of $K_\infty(u|0)$ and $D_\infty(v|0)$ are quite different,
\begin{equation}
\lim_{|u|\to 0}K_\infty(u|0)=\frac 2 \pi,\,\,\,\,\,\lim_{|v|\to 0}D_\infty(v|0)=0.
\end{equation}
But the long distance behaviors are the same,\begin{equation}
\lim_{|u|\to \infty}K_\infty(u|0)=\frac 1 {\pi},\,\,\,\,\,\lim_{|v|\to \infty}D_\infty(v|0)=\frac 1\pi .
\end{equation}

 Intuitively, the rescaling limit measures  the asymptotic probability of finding critical points/zeros  in the geodesic ball of length scale $n^{-\frac{1}{2}}$ centered at the conditioning point. Roughly speaking, the limit $D_\infty(v|0)$ tends to $0$ as $|v|\to 0$ indicates that given a critical point at $q$, it's unlikely to find
a zero near $q$, i.e., there is a `repulsion` between zeros and the conditioning critical point. Such behavior is quite different from that of the critical points given a zero: the limit of $K_\infty(u|0)$ tends to a constant as $|u|\to 0$ indicates that critical points
and the conditioning zero behave `neutrally` for the short distance. It should be very interesting to see the behaviors for the higher dimensions.



As a remark, the universal rescaling limits  $K_\infty(u|0)$ and  $D_\infty(v|0)$ are the rescaling conditional densities for the Bargmann-Fock case with the conditioning point at $z=0$, i.e., the corresponding conditional densities for the random holomorphic functions,
$$f(z)=\sum_{j=0}^\infty \frac {a_j}{\sqrt{j!}}z^j,$$
where $a_j$ are i.i.d. standard complex Gaussian random variables with mean $0$ and variance $1$. The reason of this can be tell from the proof: the covariance kernel of Gaussian random holomorphic sections is expressed by the Bergman kernel, while the rescaling limit of  the Bergman kernel on any polarized line bundle over \kahler manifolds is universal which is the rescaling limit of the Bergman kernel of the Bargmann-Fock space (see Remark \ref{bgj}).

\bigskip

\textbf{ Acknowledgement:} The author would like to thank Steve Zelditch for his generous support for so many years.
 He would like to thank Bernard Shiffman, Zuoqin Wang, Zhiqin Lu and Zhenan Wang for many helpful discussions.
Many thanks also go to Richard Wentworth, Robert Adler and Bo Guan.




 \section{Background}\label{bg}
 In this section, we will review some basic concepts and notations on Gaussian random holomorphic sections of
 positive holomorphic line bundles over Riemann surfaces. We refer to \cite{BSZ1, GH} for more details.

\subsection{\kahler manifolds}
Let $(M,\omega)$ be a compact Riemann surface (which is a \kahler manifold) with the \kahler form \begin{equation}\omega=\frac{\d^2\phi}{\d z \d\bar z}\frac{\sqrt{-1}}{2}dz\wedge d\bar z,\end{equation} where $\phi$ is the local \kahler potential in a local coordinate patch $U\subset M$. Let $(L,h)\to (M,h)$ be a positive holomorphic line bundle such that the curvature of the Hermitian metric $h$
 \begin{equation}\Theta_h=-\frac{\d^2 \log h }{\d z \d\bar z}\frac{\sqrt{-1}}{2}dz\wedge d\bar z\end{equation}
is a positive $(1,1)$ form \cite{GH}. Let $e$ be a local non-vanishing holomorphic section of $L$ over $U \subset M$ such that locally $L|_U\cong U\times\C$ and the pointwise $h$-norm of $e$ is $|e|_{h} = h(e, e)^{
1/2}$. Throughout the article, we further assume that the line bundle is polarized, i.e., $\Theta_h=\omega$ or equivalently $|e|^2_h=h(e,e)=e^{-\phi}$.

We denote by $H^0(M,L^n)$ the space of global holomorphic sections of $n$-th tensor power of $L$. Locally, we can write the global holomorphic section of $L^n$ as $s_n=f_ne^{\otimes n}$ where $f_n$ is a holomorphic function on $U$.
We denote the dimension of $H^0(M,L^n)$ by $d_n$. The Hermitian
metric $h$ induces a Hermitian metric $h
^n$ on $L^n$ given by $|e^{ \otimes n}|_{h^n}=|e|_h^n$,  i.e., $|s_n|^2_{h^n}=|f_n|^2h^n(e^{\otimes n},e^{\otimes n})=|f_n|^2e^{-n\phi}$.

We decompose the smooth Chern connection  $\nabla_{h^n}=\nabla'_{h^n}+\nabla''_{h^n}$ of the Hermitian line bundle $(L^n,h^n)$  into holomorphic and antiholomorphic parts where in the local coordinate $\nabla'_{h^n}=d_z+n\d\log h$ and $\nabla''_{h^n}=d_{\bar z}$. For the polarized line bundles, given a global holomorphic section $s_n=f_ne^{\otimes n}$, we have the following formula \cite{GH}
 \begin{equation}\label{dddddk}\nabla_{h^n}s_n=(\frac{\partial f_n}{\partial z}-n\frac{\partial \phi}{\partial z}f_n) e^{\otimes n}\otimes dz. \end{equation}

We can define an inner product on $H^0(M,L^n)$ as
\begin{equation}\label{innera}\langle s_{n,1}, s_{n,2}\rangle_{h^n}=\int_M h^n( s_{n,1}, s_{n,2})\omega.\end{equation}

Under the local coordinate it reads,

\begin{equation}\label{innera2}\langle s_{n,1}, s_{n,2}\rangle_{h^n}=\int_M f_{n,1}, \overline{f_{n,2}}h^n(e^{\otimes n},e^{\otimes n}) \omega=\int_M f_{n,1}, \overline{f_{n,2}} e^{-n\phi}\omega,\end{equation}
where
$ s_{n,1}=f_{n,1}e^{\otimes n}$ and $s_{n,2}= f_{n,2}e^{\otimes n}$.

\subsection{Gaussian random fields}\label{Gaussianva}

Let's recall that  a complex  Gaussian measure on $\C^k$ is a measure of the
form
\begin{equation}\label{cxgauss}d\ga_\De= \frac{e^{-\langle\De\inv
z,\bar z\rangle}}{\pi^{k}{\det\De}} d\ell^k_z\,,\end{equation}
where $d\ell_z^k$ denotes Lebesgue measure on $\C^k$ and $\De$ is a
positive definite Hermitian $k\times k$ matrix.  The  matrix
$\De$ is the covariance matrix.

The inner product \eqref{innera} induces a complex Gaussian probability measure $d\gamma_{d_n}$ on the space $\hn$ as,
\begin{equation}\label{standardgauss}d\gamma_{d_n}(s_n)=\frac{e^{-|a|^2}}{\pi^{d_n}}da,\,\,\,\,\, s_n=\sum_{j=1}^{d_n}a_j s_{n,j},\end{equation}
where $\{s_{n,1},..., s_{n,d_n}\}$
 is an orthonormal basis for $\hn$ and $\{a_1,..., a_{d_n}\}$ are i.i.d. standard complex Gaussian random
 variables with mean $0$ and variance $1$. 

   \section{Conditional expectation}\label{ce}
   In this section, we will rewrite the conditional expectations of
   the empirical measures  in \eqref{firstdef}\eqref{seconddef} by defining the conditional Gaussian measure, then we will derive the covariance kernels for the conditional Gaussian processes.
\subsection{Conditional Gaussian measure}
   Let $d\gamma$ be a complex Gaussian measure on a finite dimensional
   complex vector space $V$, let $ W$ be a vector subspace of $V$. We define the conditional Gaussian measure $d\gamma_W$ on $W$ to be the restriction of $d\gamma$ on $W$ associated with
   the Hermitian inner product on $W$ induced by the inner product on $V$ \cite{SZZh}.


    This definition can be understood as follows. For simplicity we let $ W$ be a vector subspace of $V$ of codimension $1$. We let $\{v_1,...,v_{m}\}$ be an orthonormal basis of $V$ and $d\gamma$ be a Gaussian measure induced by the inner product,

  $$d\gamma(v)=\frac{e^{-|a|^2}}{\pi^{m}}da,\,\,\,\, v=\sum_{j=1}^m a_j v_j,$$
   where $a_j$ are i.i.d. standard Gaussian random variables.

   Let $\{w_1,...,w_{m-1}\}$ be an orthonormal basis of $W$ where the inner product is induced by that of $V$. Then we extend it to an orthonormal basis  $\{w_1,...,w_{m-1}, w_m\}$ of $V$. There exists a unitary group $U=(u_{ij})$ such that $v_j=\sum_{i=1}^m u_{ji}w_i$. Then we can express $v=\sum_{j=1}^m a_j (\sum_{i=1}^m u_{ji}w_i)=\sum_{i=1}^m (\sum_{j=1}^ma_j u_{ji})w_i$. By the definition of the conditional expectation, the conditional Gaussian measure $d\gamma_W$ induced by $d\gamma$ is actually induced by the conditional Gaussian process  $$w=\sum_{i=1}^{m-1} (\sum_{j=1}^ma_j u_{ji})w_i,$$
which is the projection of $v$ to $W$.
The conditional expectation taken with respect to $d\gamma(v)$ is actually taken with respect to $d\gamma_W(w)$.
  Furthermore, we can compute the covariance kernel of the conditional Gaussian process. The covariance kernel for the Gaussian process $v$ is
$$C_V(v(x), v(y))=\e_{d\gamma} (v(x)\bar v(y))=\sum_{j=1}^m v_j(x)\bar v_j(y)=\sum_{j=1}^m w_j(x)\bar w_j(y),$$
and the covariance kernel for the conditional Gaussian process is
\begin{align*}C_W(w(x), w(y))&=\e_{d\gamma_W} (w(x)\bar w(y))\\ &=\e_{d\gamma}\left([\sum_{i=1}^{m-1} (\sum_{j=1}^ma_j u_{ji})w_i][\sum_{i=1}^{m-1} (\sum_{j=1}^m\bar a_j \bar u_{ji})\bar w_i]\right)\\&  =\sum_{j=1}^{m-1} w_j(x)\bar w_j(y).\end{align*}
Hence, we have the following crucial relation,
\begin{equation}\label{relation}
C_W(w(x), w(y))=C_V(v(x), v(y))-w_m(x)\bar w_m(y).
\end{equation}

\subsection{Conditional densities}
   Now we can define the expected density of critical points of Gaussian random sections
   given that the random sections vanish at a point $p$ by the conditional Gaussian measure.
 We need the following bundle-value map\begin{equation}T: \hn \to L_p^n,\,\,\,s_n\to s_n(p).\end{equation}
   We define the kernel of $T$ as $\hnp\subset \hn$ which is a subspace of codimension $1$.

   We denote $C_{s_n}$ as the empirical measure of critical points of sections,
   \begin{equation}C_{s_n}=\{z\in M: \nabla_{h^n}s_n(z)=0\}.\end{equation}
   By definition of conditional Gaussian measure, we have the following relation, \begin{equation}\e_{\left(\hn,d\gamma_{d_n}\right)} (C_{s_n}| s_n(p)=0)=\e_{\left(\hnp, d\gamma^p_{d_n-1}\right)}(C_{s_n}),\end{equation}
  in the sense of distribution,
   \begin{equation}\label{rewritet}\langle \e_{\left(\hn,d\gamma_{d_n}\right)} (C_{s_n}| s_n(p)=0),\psi\rangle=
\e_{\left(\hnp, d\gamma^p_{d_n-1}\right)}\langle C_{s_n},\psi\rangle,
    \end{equation}        for any test function $\psi\in C_0^\infty(M)$, where $d\gamma^p_{d_n-1}$ is the conditional Gaussian measure
    which is the restriction of $d\gamma_{d_n}$ on $\hnp$.

 We denote $K_n(z|p)$ as the conditional expectation $\e_{\left(\hn,d\gamma_{d_n}\right)} (C_{s_n}| s_n(p)=0)$, then $K_n(z|p)$ is a $(1,1)$-current \cite{GH} and we can rewrite \eqref{rewritet} as \begin{equation}\label{alternative}\int_M \psi K_n(z|p)=  \e_{\left(\hnp, d\gamma^p_{d_n-1}\right)}\left(\sum_{z\in C_{s_n}}\psi(z)\right). \end{equation}

 We need the following bundle-value linear map to define the conditional expectation of zeros given that the random sections have a critical point at $q$,
 \begin{equation}\label{linearmap2}K:  \hn\to (L^n\otimes T^{*'}M)_q,\,\,\,s_n\to \nabla_{h^n}s_n(q),
 \end{equation}
where $\nabla_{h^n}$ is the Chern connection.  We denote the kernel of this linear map  as $\hnq$. By the Kodaira embedding, $\hnq$ is a subspace of $\hn$ of codimension $1$. \footnote{We thank Prof. Zhiqin Lu for clarifying this.}

We denote   \begin{equation}Z_{s_n}=\{z\in M: s_n(z)=0\},\end{equation} and denote $$D_n(z|p)=:\e_{\left(\hn,d\gamma_{d_n}\right)} (Z_{s_n}| \nabla_{h^n}s_n(p)=0).$$ Then similarly, we have

\begin{equation}\label{alternative1}\int_M \psi D_n(z|q)=  \e_{\left(\hnq, d\gamma^q_{d_n-1}\right)}\left(\sum_{z\in Z_{s_n}}\psi(z)\right), \end{equation}
where $d\gamma^q_{d_n-1}$ is the conditional Gaussian measure on the subspace $\hnq$.

  \section{Proof of Theorem \ref{thm1}}\label{thm122}

In this section, we will derive a Kac-Rice type formula for the global expression of the conditional expectation $K_n(z|p)$.
The formula may be derived from \cite{AT, BSZ1, DSZ1, DSZ2} but we take advantage of some simplifications to speed up the proof.

\subsection{Kac-Rice formula}
 In the local coordinate $U\cong\C$ and a local trivialization of $L$, we write the conditional Gaussian random sections with a zero at $p$ as $s^p_n=f^p_ne^{\otimes n}.$
 We will prove the following,

\begin{lem}\label{joint1} The (1,1)-current of the conditional expectation of critical points of sections
with a conditioning zero at $p$ with respect to the Gaussian measure $d\gamma_{d_n}$ is
\begin{equation}K_n(z|p)=   \left(\int_{\C^2} p^n_{z}(y,0,\xi)\left| |\xi|^2-n^2   |y|^2 \right|d\ell_yd\ell_\xi\right)\omega,\end{equation}
where $p^n_{z}(y,s,\xi)$ is the joint density of the conditional Gaussian processes $(f_n^p,\frac{\d f_n^p}{\d z},\frac{\d ^2 f_n^p}{\d z^2})$; $d\ell_y$ and $d\ell_\xi$ are Lebesgue measures on $\C$.
\end{lem}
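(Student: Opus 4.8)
The plan is to realize the critical-point set as the zero set of a smooth (non-holomorphic) section and to apply the complex Kac--Rice formula to the conditional Gaussian field. By \eqref{dddddk}, in the local frame and coordinate the critical points of $s_n^p=f_n^pe^{\otimes n}$ are the zeros of the smooth map
$$G(z)=\frac{\d f_n^p}{\d z}-n\frac{\d\phi}{\d z}f_n^p,\qquad G:U\subset\C\to\C.$$
Viewing $G$ as a map $\R^2\to\R^2$, the Kac--Rice formula expresses the expected zero-counting measure, tested against $\psi$, as
$$\e_{(\hnp,d\gamma^p_{d_n-1})}\Big(\sum_{z:G(z)=0}\psi(z)\Big)=\int_U\psi(z)\,\e\big(|\det DG(z)|\,\big|\,G(z)=0\big)\,p_{G(z)}(0)\,d\ell_z,$$
where $\det DG=|\d_zG|^2-|\d_{\bar z}G|^2$ is the real Jacobian written in complex form and $p_{G(z)}(0)$ is the Gaussian density of $G(z)$ at the origin. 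A first observation I would record is that, since the conditional process $f_n^p$ is holomorphic, $\d_{\bar z}f_n^p=0$, so only the connection term contributes to the antiholomorphic derivative of $G$.

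Next I would fix, for each evaluation point $z$, a K\"ahler normal coordinate centered at $z$ so that $\d_z\phi(z)=0$, $\d_z^2\phi(z)=0$ and $\d_z\d_{\bar z}\phi(z)=1$; this is legitimate because $\phi$ is a local \kahler potential and the polarization $\om=\Theta_h$ fixes the normalization of the metric. In this coordinate the conditioning $G(z)=0$ reduces to $\d_zf_n^p(z)=0$, while
$$\d_zG(z)=\d_z^2f_n^p(z)=:\xi,\qquad \d_{\bar z}G(z)=-n\big(\d_z\d_{\bar z}\phi\big)f_n^p(z)=-n\,y,\qquad y:=f_n^p(z),$$
so that on the critical set $|\det DG(z)|=\big|\,|\xi|^2-n^2|y|^2\,\big|$, which is exactly the weight appearing in the statement. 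The sign-changing nature of this weight (and the resulting absolute value) is precisely the feature responsible for the later short-distance behaviors.

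Finally I would fold the conditional expectation and the density together into the joint density of the $2$-jet. Writing $(y,s,\xi)$ for the values $(f_n^p,\d_zf_n^p,\d_z^2f_n^p)$ with joint Gaussian density $p^n_z(y,s,\xi)$, the identity
$$\e\big(|\det DG(z)|\,\big|\,G(z)=0\big)\,p_{G(z)}(0)=\int_{\C^2}\big|\,|\xi|^2-n^2|y|^2\,\big|\,p^n_z(y,0,\xi)\,d\ell_yd\ell_\xi$$
holds because conditioning on $G(z)=0$ is conditioning on $s=0$, and the factor $p_{G(z)}(0)$ supplies exactly the marginal that promotes the conditional law of $(y,\xi)$ to the full joint density with its middle slot frozen at $0$. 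Substituting this into the Kac--Rice identity and converting $d\ell_z$ to the invariant area form through $\om=\big(\d_z\d_{\bar z}\phi\big)\frac{\sqrt{-1}}{2}dz\wedge d\bar z$ (which equals $d\ell_z$ at the normalized point) yields the claimed expression for the $(1,1)$-current $K_n(z|p)$.

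The main obstacle is twofold. First, one must justify the Kac--Rice formula for the conditional field: nondegeneracy of the covariance of $(f_n^p,\d_zf_n^p,\d_z^2f_n^p)$, which I would deduce from the positivity of the conditional Bergman kernel $\Pi_n^p$ away from $p$, together with almost-sure simplicity of the critical points and integrability of the Jacobian weight. Second, and more delicate, is the bookkeeping that turns the abstract conditional-expectation-times-density into the explicit joint density with the middle coordinate set to zero, and the verification that the resulting density is coordinate independent, so that $K_n(z|p)$ is a well-defined global $(1,1)$-current; this is where the normalization $\d_z\d_{\bar z}\phi(z)=1$ and the $\om$ factor must be tracked carefully. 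The subsequent passage to the explicit Gaussian matrices $A_n,B_n,C_n,\Lambda_n$ of Theorem \ref{thm1} is then the routine computation of the covariance of the $2$-jet from $\Pi_n^p$.
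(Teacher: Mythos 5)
Your proposal is correct and follows essentially the same route as the paper: realize the critical set as the zero set of the smooth map $G=\d_z f_n^p-n(\d_z\phi)f_n^p$, apply the complex Kac--Rice/co-area identity with Jacobian $\bigl|\,|\d_zG|^2-|\d_{\bar z}G|^2\bigr|$, and then use K\"ahler normal coordinates and an adapted frame (so that $\d\phi=\d^2\phi=0$, $\d\dbar\phi=1$, $\om=d\ell_z$ at the base point) to reduce the weight to $\bigl|\,|\xi|^2-n^2|y|^2\bigr|$ and the density to $p^n_z(y,0,\xi)$. The only cosmetic difference is that the paper phrases the Kac--Rice step via the pullback of the delta function applied to the metric-normalized variables $p_n,q_n,r_n$ (with the $e^{-n\phi/2}$ factor, which drops out on the zero set) and writes the full local formula before freezing at a point, whereas you normalize at each point from the start; the content is identical.
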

\begin{proof}
The strategy to get this formula is to find the local expression in a coordinate path $U\cong \mathbb C$, then turn it to be global.

We denote
\begin{equation}\Omega_p=\{z\in \C:\,\,(\frac{\d f^p_n}{\d z}-n\frac{\d\phi}{\d z} f^p_n)e^{-\frac{n\phi}2}=0\},\end{equation}
  then $\Omega_p$ is the same as the set of critical points $\{z\in \C:\, \nabla_{h^n} s^p_n=0\}$ which is $\{z\in \C:\,\frac{\d f^p_n}{\d z}-n\frac{\d\phi}{\d z} f^p_n =0\}$
	on the local coordinate patch (recall \eqref{dddddk}).

We first introduce some notations: \begin{equation}\label{dddddj}p_n=f^p_n\eh ,\,\,\, q_n=(\frac{\d f^p_n}{\d z}-n\frac{\d\phi}{\d z} f^p_n)\eh,\,\,\,r_n=\frac{\d ^2 f^p_n}{\d z^2}\eh,\end{equation} then
$p_n$, $q_n$ and $r_n$ are all complex Gaussian random variables.

By definition of the delta function, for any test functions $\psi\in C_0^\infty(\C)$ we have,
\begin{align*}&\langle \sum_{z\in \Omega_p}\delta_{z},\psi\rangle\\
=& \sum_{z:\,\,\, q_n(z)=0}\psi(z)\\=& \int_{\C} \delta_0(q_n)\psi(z) \frac{\sqrt{-1}}{2}dq_n\wedge d\bar{q}_n\\=&
\int_{\C} \delta_0(q_n)\psi(z)\left||\frac{\d q_n}{\d z}|^2-|\frac{\partial q_n}{\d\bar z}|^2\right|d\ell_z,
\end{align*} where $d\ell_z $ is the Lebesgue measure on $\C$.

By direct computations, we have,
\begin{align*}\frac{\d q_n}{\d z}&=(\d^2f^p_n-n\d\phi \d f^p_n-n\d^2\phi f^p_n)\eh-\frac n2 \d\phi q_n \\
&=r_n-n\d\phi q_n -n^2 (\d\phi)^2  p_n- n\d^2\phi p_n-\frac n2\d\phi q_n
\end{align*}
and
$$\frac{\d q_n}{\d \bar z}=-n\d\dbar \phi p_n-\frac n 2 \dbar\phi q_n.  $$

By taking expectation on both sides, we have locally,
\begin{align*}&\e\langle  \sum_{z\in \Omega_p}\delta_{z},\psi\rangle \\
=&
\e\int_{\C} \delta_0(q_n)\psi(z)\left||\frac{\d q_n}{\d z}|^2-|\frac{\partial q_n}{\d\bar z}|^2\right|d\ell_z\\
=& \int_{\C^3}  \psi(z) p^n_z(y,0,\xi) \left||\xi-n^2 (\d\phi)^2 y-n\d^2\phi y|^2-n^2|\d\dbar \phi|^2 |y|^2 \right|
d\ell_\xi   d\ell_yd\ell_z
\end{align*}
where $p^n_z(y,s,\xi)$ is the joint probability of the Gaussian random field $(p_n, q_n, r_n)$, $d\ell_\xi$ and $d\ell_y$ are Lebesgue measures on $\C$.
Thus the conditional density is  locally given by the (1,1)-current,
\begin{equation}\label{d1}
\left(\int_{\C^2}   p^n_z(y,0,\xi) \left||\xi-n^2 (\d\phi)^2 y-n\d^2\phi y|^2-n^2|\d\dbar \phi|^2 |y|^2 \right|
d\ell_\xi   d\ell_y\right)d\ell_z
\end{equation} on the local coordinate patch $U \cong \C$ of   Riemann surfaces.

Now we need to get the global expression for the conditional density. Since the conditional expectation is a (1,1)-current globally defined on the Riemann surface (which is also independent of the local coordinate and the local frame),
it's sufficient to find the formula when we freeze at a point and the formula will turn out to be global.
For this purpose, given a complex $m$-dimensional \kahler manifold $(L,h)\to(M,\omega)$, we freeze at a point $z_0$ as the origin of the coordinate patch and choose the \kahler normal coordinate $\{z_j\}$ as well as an adapted frame $e_L$ of the line bundle $L$ around $z_0$.  It is well-known that in terms of \kahler normal coordinates $\{z_j\}$,
the K\"ahler potential $\phi$ has the following expansion in the neighborhood of the origin $z_0$, \begin{equation}\label{kahler}\phi(z,\bar z)= \|z\|^2 -\frac 14 \sum R_{j\bar
kp\bar q}(z_0)z_j\bar z_{\bar k} z_p\bar z_{\bar q} +
O(\|z\|^5)\;.\end{equation}And thus,  \begin{equation}\label{adapted}\phi(z_0)=0,\,\, \d\phi(z_0)=0,\, \d^2\phi(z_0)=0,\,\,\d\dbar\phi(z_0)=1,\,\, \omega(z_0)=d\ell_z .\end{equation}
An example on the \kahler normal coordinate and the adapted frame is present in \S \ref{su2}, it is the affine coordinate for the Fubini-Study metric of the hyperplane line bundle over the complex projective space $(\mathcal O(1), h_{FS})\to (\CP,\omega_{FS})$. We also refer to \S 3.1 in \cite{DSZ1} for more details.

After we choose the normal coordinate at $z_0$, by identities \eqref{adapted}, the joint density of the Gaussian processes $(p_n, q_n, r_n)$ (recall \eqref{dddddj}) at $z_0$ should be the same as the joint density of Gaussian processes $(f^p_n, \d f^p_n, \d^2 f^p_n)$.
Hence, by \eqref{adapted} again, the local expression (\ref{d1}) admits the following global expression,
$$\e\left(\sum_{z\in \Omega_p}\delta_{z}\right)=\left(\int_{\C^2} p^n_{z}(y,0,\xi)\left| |\xi|^2-n^2   |y|^2 \right|d\ell_yd\ell_\xi\right)\omega:= K_n(z|p),$$
where  $p^n_{z}(y,s,\xi)$ is the joint density of the Gaussian processes $(f_n^p,\frac{\d f_n^p}{\d z},\frac{\d ^2 f_n^p}{\d z^2})$.
This completes the proof of Lemma \ref{joint1}.
\end{proof}
 \subsection{Proof of Theorem \ref{thm1} }
  In this subsection, we will calculate the joint density $p_z^n$ of the Gaussian processes of $(f^p_n, \d f^p_n, \d^2 f^p_n)$.

 For the conditional Gaussian processes $(f^p_n, \d f^p_n, \d^2 f^p_n)$, the joint density is given by the formula \cite{AT}
 \begin{equation} p_z^n(y,s,\xi)=\frac{1}{\pi^3}\frac{1}{\det \Delta_n}\exp \left\{\left \langle \begin{pmatrix}y\\ s\\ \xi
 \end{pmatrix}, (\Delta_n)^{-1} \begin{pmatrix}\bar y\\ \bar s\\ \bar\xi
 \end{pmatrix}\right \rangle
    \right\},
 \end{equation}

  where $\Delta_n$ is the covariance matrix of the conditional Gaussian process $(f^p_n, \d f^p_n, \d^2 f^p_n)$.

We rearrange the order of the Gaussian processes and write $\tilde\Delta_n$ as the covariance
matrix of  $(\d f^p_n, \d^2 f^p_n,f^p_n)$, then we rewrite
 \begin{equation} p_z^n(y,s,\xi)=\frac{1}{\pi^3}\frac{1}{\det \tilde\Delta_n}\exp \left\{\left \langle \begin{pmatrix}s\\ \xi\\ y
 \end{pmatrix}, (\tilde\Delta_n)^{-1} \begin{pmatrix}\bar s\\ \bar \xi\\ \bar y
 \end{pmatrix}\right \rangle
    \right\}.
 \end{equation}

  The covariance matrix is then given by
    \begin{equation}\tilde \Delta_n=\begin{pmatrix}A_n& B_n\\ B_n^*& C_n
    \end{pmatrix}_{3\times 3},\end{equation}

   where $$A_n=\d_{  z}\d _{\bar w}\Pi_n^p(z,w)|_{z=w},$$
    $$B_n=(\d_{  z}\d^2_{\bar w}\Pi_n^p(z,w)|_{z=w} ,\d_{  z}\Pi_n^p(z,w)|_{z=w}),$$
   and
   $$C_n=\begin{pmatrix}\d_{  z}^2\d_{\bar w}^2\Pi_n^p(z,w)|_{z=w}& \d_{  z}^2\Pi_n^p(z,w)|_{z=w}\\ \d^2_{\bar w}\Pi_n^p(z,w)|_{z=w}&\Pi_n^p(z,z) \end{pmatrix},$$

  where 
 $\Pi_n^p(z,w)$ is the covariance kernel of the conditional Gaussian process $f^p_n$.

   Thus, when $s=0$, by some element matrix computations, we have,
\begin{lem} \label{pz}
With all notations above,
\begin{equation}\label{denty}p^n_z(y,0,\xi) =\frac 1{\pi^{3} }\frac 1{ A_n \det \Lambda_n}\exp\left\{-\left\langle \begin{pmatrix} \xi  \\y\end{pmatrix}, \Lambda_n^{-1}\begin{pmatrix} \bar \xi \\ \bar y \end{pmatrix}\right \rangle\right\},
\end{equation}
where
\begin{equation}\Lambda_n=C_n- A_n^{-1}B_n^* B_n.\end{equation}
   \end{lem}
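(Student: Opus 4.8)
The plan is to treat this as a purely algebraic identity about the $3\times 3$ Hermitian Gaussian density and to reduce it via the Schur complement of the block decomposition of $\tilde\Delta_n$. Writing $\zeta=(s,\xi,y)^T$, I would use that in the rearranged ordering the covariance matrix $\tilde\Delta_n=\begin{pmatrix} A_n & B_n\\ B_n^* & C_n\end{pmatrix}$ has scalar $(1,1)$-block $A_n>0$, a $1\times 2$ block $B_n$, and a $2\times 2$ block $C_n$. The Schur complement of $A_n$ is then exactly
$$\Lambda_n=C_n-B_n^*A_n\inv B_n=C_n-A_n\inv B_n^*B_n,$$
the last equality holding because $A_n\inv$ is a scalar; this is the matrix appearing in the statement.

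First I would record the two standard consequences of this block decomposition. The determinant factors as $\det\tilde\Delta_n=\det A_n\cdot\det\Lambda_n=A_n\det\Lambda_n$, which immediately produces the prefactor $\frac{1}{A_n\det\Lambda_n}$. Next, the block-inversion formula gives
$$\tilde\Delta_n\inv=\begin{pmatrix} A_n\inv+A_n\inv B_n\Lambda_n\inv B_n^*A_n\inv & -A_n\inv B_n\Lambda_n\inv\\ -\Lambda_n\inv B_n^*A_n\inv & \Lambda_n\inv\end{pmatrix},$$
so the lower right $2\times 2$ block of $\tilde\Delta_n\inv$ is precisely $\Lambda_n\inv$. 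The final step is to evaluate the Hermitian quadratic form in the exponent at $s=0$: since setting $s=0$ replaces $\zeta$ by $(0,\xi,y)^T$, every term involving the first coordinate is killed and the form collapses to the one determined by the lower right block of $\tilde\Delta_n\inv$, namely
$$\langle\zeta,\tilde\Delta_n\inv\bar\zeta\rangle\big|_{s=0}=\left\langle\begin{pmatrix}\xi\\y\end{pmatrix},\Lambda_n\inv\begin{pmatrix}\bar\xi\\\bar y\end{pmatrix}\right\rangle.$$
Substituting this quadratic form together with the determinant factorization into the joint density $p_z^n(y,s,\xi)$ yields \eqref{denty}.

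I do not expect a genuine obstacle here: the content is the elementary Schur-complement bookkeeping for a block matrix with scalar corner, and the two points requiring only a word of care are that $\tilde\Delta_n$ is positive definite (so that $A_n>0$ and $\Lambda_n$ is positive definite, making the restricted expression a bona fide Gaussian density), which follows from the non-degeneracy of the conditional Gaussian process $f_n^p$, and the consistent tracking of the sign in the exponent so as to match the normalization of the complex Gaussian measure \eqref{cxgauss}.
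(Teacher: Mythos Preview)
Your proposal is correct and is precisely the ``element matrix computation'' the paper alludes to without spelling out: the Schur complement of the scalar block $A_n$ in $\tilde\Delta_n$ gives $\det\tilde\Delta_n=A_n\det\Lambda_n$ and identifies the lower-right $2\times2$ block of $\tilde\Delta_n^{-1}$ as $\Lambda_n^{-1}$, after which setting $s=0$ collapses the exponent to the desired quadratic form. Your remarks on positive definiteness and on the sign in the exponent (which is indeed dropped in one displayed formula in the paper but restored in the lemma) are apt.
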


We combine  Lemma \ref{joint1} and Lemma \ref{pz} to rewrite,
$$K_n(z|p)=   \left(\int_{\C^2} \frac 1{\pi^{3} }\frac 1{ A_n \det \Lambda_n}\exp\left\{-\left\langle \begin{pmatrix} \xi  \\y\end{pmatrix}, \Lambda_n^{-1}\begin{pmatrix} \bar \xi \\ \bar y \end{pmatrix}\right \rangle\right\}\left| |\xi|^2-n^2   |y|^2 \right|d\ell_yd\ell_\xi\right)\omega.$$
  Hence, we will complete the proof of Theorem \ref{thm1} once we  derive the expression of the  covariance kernel $\Pi_n^p(z,w)$ of the conditional Gaussian process. This is computed in the next subsection.
\subsection{Covariance kernel}\label{djdjdjd}

The Bergman kernel is the orthogonal projection from the $L^2$ integral
sections to the holomorphic sections,
\begin{equation}\label{projectionof}\Pi_n(z,w): L^2(M, L^n)\to \hn
\end{equation}
with respect to the inner product \eqref{innera}.
It has the following reproducing property
\begin{equation}\label{repo} \langle s_n(z),\Pi_n(z,w) \rangle_{h^n}=s_n(w),
\end{equation}
where $s_n \in \hn$ is a global holomorphic section.
Let $\{s_{n,1}, ..., s_{n,d_n}\}$
 be any orthonormal basis of  $\hn$ with respect to the inner product \eqref{innera}, then we have, 
\begin{equation}\label{bergman1} \Pi_n(z,w)=\sum_{j=1}^{d_n} s_{n,j}(z)\otimes \overline{s_{n,j}(w)}.
\end{equation}
It's easy to check that $\Pi_n(z,w)$ is also the covariance kernel of the Gaussian process $(\hn,d\gamma_{d_n})$ defined by \eqref{standardgauss}.

 Recall $H^p_0\subset H^0(M, L^n)$ is the space of holomorphic sections vanishing at $p$.
 Let $\{s_{n,1}^p, ..., s_{n,d_n-1}^p\}$
 be an orthonormal basis of  $H^p_0$ with respect to the inner product \eqref{innera}.
 By the reproducing property of the Bergman kernel $\Pi_n(z,w)$, one can show that the holomorphic sections $\{s_{n,1}^p, ..., s_{n,d_n-1}^p, \Phi_n^p\}$
 will be an orthonormal basis for $H^0(M, L^n)$ (see equation (3.7) in \cite{SZZh} for more details) where (by discarding the local frame $e^{\otimes n}$)\begin{equation}
 \Phi_n^p(z)=\frac{\Pi_n(z,p)}{\sqrt{\Pi_n(p,p) }}.
 \end{equation}


Recall relation \eqref{relation}, then the covariance kernel of the conditional Gaussian process is

 \begin{equation}\label{ddd}\Pi_n^p(z,w)=\Pi_n(z,w)-\Phi_n^p(z)\overline{\Phi_n^p(w)}. \end{equation}
  Hence, we complete the proof of Theorem \ref{thm1}.
\subsection{Further simplification}
 We can further simplify the expression of $K_n(z|p)$ in Theorem \ref{thm1} as follows. Let $H=(\xi, y)$, then we can rewrite
$$ \int_{\C^2} \frac 1{\pi^{3} }\frac 1{ A_n \det \Lambda_n}\exp\left\{-\left\langle \begin{pmatrix} \xi  \\y\end{pmatrix}, \Lambda_n^{-1}\begin{pmatrix} \bar \xi \\ \bar y \end{pmatrix}\right \rangle\right\}\left| |\xi|^2-n^2   |y|^2 \right|d\ell_yd\ell_\xi$$as

$$ \frac 1{\pi^{3} }\frac 1{ A_n \det \Lambda_n}\int_{\C^2}e ^{-H\Lambda_n^{-1} H^*}|HQ_nH^*| d\ell_H, $$

    where $Q_n=\begin{pmatrix}1 & 0\\ 0& -n^2\end{pmatrix}$ and $d\ell_H$ is the Lebesgue measure on $\C^2$.

    We change variable $H\to H\Lambda_n^{-\frac12}$ to get

$$ \frac 1{\pi^{3} }\frac 1{ A_n  }\int_{\C^2}e ^{-  HH^*}|H\Lambda_n^{\frac12} Q_n\Lambda_n^{\frac 12} H^*| d\ell_H. $$

    We diagonalize  $\Lambda_n^{\frac12} Q_n\Lambda_n^{\frac 12}$ with eigenvalues $\lambda_1^n$, $\lambda_2^n$ (note that it is easy to check $\lambda_1^n$ and $\lambda_2^n$ are also eigenvalues of $\Lambda_n Q_n$) to simplify the above integration as

\begin{align*}&\frac 1{\pi^{3} }\frac 1{ A_n  }\int_{\C^2}|\lambda^n_1|y|^2+\lambda_2^n|\xi|^2 | e ^{-   |y|^2-|\xi|^2}d\ell_yd\ell_\xi \\=&\frac 1{\pi  }\frac 1{ A_n  }\int_{0}^\infty\int_0^\infty|\lambda_1^nx+\lambda_2^ny | e ^{-x-y}dxdy \\=&\frac 1{\pi  }\frac 1{ A_n  }\frac{(\lambda_1^n)^2+(\lambda_2^n)^2}{|\lambda_1^n|+|\lambda_2^n|}.\end{align*}

Thus we have,
\begin{lem}\label{eign}The conditional expectation is  \begin{equation}\label{formula}K_n(z|p)=\frac 1{\pi  }\frac 1{ A_n  }\frac{(\lambda_1^n)^2+(\lambda_2^n)^2}{|\lambda_1^n|+|\lambda^n_2|} \omega(z),\end{equation} where $\lambda_1^n$ and $\lambda_2^n$ are eigenvalues of $(\Lambda_nQ_n)(z)$. \end{lem}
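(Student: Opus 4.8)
The plan is to reduce the Gaussian integral produced by Theorem \ref{thm1} to a normalized form in which the absolute value can be integrated explicitly. Writing $H=(\xi,y)$ as a row vector, I would first record that the weight factor is a quadratic form, $\bigl||\xi|^2-n^2|y|^2\bigr|=|HQ_nH^*|$, so that the integral in the statement of Theorem \ref{thm1} reads $\frac{1}{\pi^3 A_n\det\Lambda_n}\int_{\C^2}e^{-H\Lambda_n^{-1}H^*}\,|HQ_nH^*|\,d\ell_H$. Since $\Lambda_n=C_n-A_n^{-1}B_n^*B_n$ is the Schur complement of the positive-definite covariance $\tilde\Delta_n$ relative to its positive scalar block $A_n$, it is itself Hermitian positive definite; hence $\Lambda_n^{1/2}$ is well defined and $\det\Lambda_n>0$, which is what makes the following substitutions legitimate.

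Next I would substitute $H\mapsto H\Lambda_n^{-1/2}$. Because this is a $\C$-linear change of variables, its real Jacobian is $|\det\Lambda_n^{1/2}|^2=\det\Lambda_n$, exactly cancelling the $\det\Lambda_n$ in the denominator; the weight becomes $e^{-HH^*}$ and the quadratic form becomes $H\bigl(\Lambda_n^{1/2}Q_n\Lambda_n^{1/2}\bigr)H^*$. Setting $S_n:=\Lambda_n^{1/2}Q_n\Lambda_n^{1/2}$ and diagonalizing it as $S_n=U^*\,\mathrm{diag}(\lambda_1^n,\lambda_2^n)\,U$ with $U$ unitary, an additional (hence weight- and measure-preserving) unitary substitution reduces the integrand to $\bigl|\lambda_1^n|y|^2+\lambda_2^n|\xi|^2\bigr|\,e^{-|y|^2-|\xi|^2}$. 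Here $\lambda_1^n,\lambda_2^n$ are also the eigenvalues of $\Lambda_nQ_n$, since $\Lambda_n^{1/2}S_n\Lambda_n^{-1/2}=\Lambda_nQ_n$. The structural point I would emphasize at this stage is that, by Sylvester's law of inertia, $S_n=(\Lambda_n^{1/2})^*Q_n\Lambda_n^{1/2}$ is congruent to $Q_n$ and therefore inherits its signature $(1,1)$: one of $\lambda_1^n,\lambda_2^n$ is strictly positive and the other strictly negative.

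Finally, passing to polar coordinates in each complex variable via $\int_{\C}g(|\cdot|^2)\,d\ell=\pi\int_0^\infty g(x)\,dx$ produces a factor $\pi^2$, so the prefactor becomes $\frac{1}{\pi^3}\cdot\pi^2=\frac1\pi$ and there remains the real integral $\int_0^\infty\!\int_0^\infty|\lambda_1^n x+\lambda_2^n y|\,e^{-x-y}\,dx\,dy$. Evaluating this is where the opposite-sign property is essential: writing $|a|=2a^+-a$, or equivalently splitting the open quadrant along the line $\lambda_1^n x+\lambda_2^n y=0$ (which meets the quadrant precisely because the coefficients have opposite signs), resolves the absolute value into elementary $\Gamma$-integrals, and the two pieces combine to $\frac{(\lambda_1^n)^2+(\lambda_2^n)^2}{|\lambda_1^n|+|\lambda_2^n|}$. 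Multiplying by the surviving $\frac{1}{\pi A_n}$ and by $\omega(z)$ yields \eqref{formula}. I expect the only genuine computation to be this last elementary integral; the one point not to overlook is that the clean closed form depends essentially on $\lambda_1^n$ and $\lambda_2^n$ having opposite signs, so verifying that $S_n$ carries the indefinite signature of $Q_n$ is the conceptual crux rather than the integration itself.
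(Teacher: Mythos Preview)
Your approach is essentially identical to the paper's: write $|\,|\xi|^2-n^2|y|^2\,|=|HQ_nH^*|$, substitute $H\mapsto H\Lambda_n^{1/2}$ to normalize the Gaussian, diagonalize $\Lambda_n^{1/2}Q_n\Lambda_n^{1/2}$ by a unitary, and reduce to the elementary integral $\int_0^\infty\!\int_0^\infty|\lambda_1^n x+\lambda_2^n y|\,e^{-x-y}\,dx\,dy=\frac{(\lambda_1^n)^2+(\lambda_2^n)^2}{|\lambda_1^n|+|\lambda_2^n|}$. The one substantive difference is that you supply an argument the paper omits: you observe that $\Lambda_n$ is a Schur complement in a positive-definite covariance, hence positive definite, and then invoke Sylvester's law of inertia to conclude that $S_n=\Lambda_n^{1/2}Q_n\Lambda_n^{1/2}$ inherits the signature $(1,1)$ of $Q_n$. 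This is a genuine improvement, since the stated closed form for the integral is \emph{false} when $\lambda_1^n,\lambda_2^n$ share a sign (e.g.\ for $\lambda_1^n=\lambda_2^n=1$ the integral equals $2$, not $1$); the paper only verifies the sign condition later in the rescaled limit, whereas your congruence argument secures it at finite $n$ for every $z$.
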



\section{Calculations of Theorem \ref{main} for Gaussian random $SU(2)$ polynomials}\label{su2}
In this section, we will derive the rescaling conditional density of critical
points for Gaussian random $SU(2)$ polynomials. This is the case where $M = \CP\cong S^2$ and $L$ is the hyperplane
line bundle $\mathcal O(1)$. The global holomorphic sections of  $\mathcal O(1)$ are linear functions on $\C^2$ and hence the global holomorphic sections of
$L^
n = \mathcal O(n)$ are homogeneous polynomials of degree $n$.

The \kahler form on $\CP$ is the Fubini-Study form.  In an affine coordinate, the \kahler form and the \kahler potential for the Fubini-Study metric are
\begin{equation}\label{fsadapt}
 \omega_{FS}=\frac{\sqrt{-1}}{2}\frac{dz\wedge d \bar z}{(1+|z|^2)^2},\,\,\,\phi(z)=\log (1+|z|^2).
\end{equation} It's easy to check that $\phi$ satisfies \eqref{adapted} and the affine coordinate is actually the \kahler normal coordinate at $z_0=0$.

We equip
$\mathcal O(1)$ with its
Fubini-Study metric. In fact,  we can choose an adapted frame $e(z)$ such that
 $$|e(z)|^2_{h_{FS}}=e^{-\phi}=\frac 1{1+|z|^2}.$$


Hence,  an orthonormal basis of $H^0(\CP,\mathcal O(n))$ under the inner product \eqref{innera} is given by
$$\label{basis1}
\left\{\left(\sqrt{(n+1) {n\choose j}}z^j\right) e^{\otimes n}\right\}_{j=0}^{n}.
$$
Throughout the article, we will discard the local frame $e^{\otimes}$ for simplicity.

The Gaussian linear combination of the above basis is Gaussian random $SU(2)$ polynomials and the distribution of zeros of such polynomials is invariant under the rotation of $S^2$ \cite{H}.

By \eqref{bergman1}, the Bergman kernel for the Fubini-Study case is
$$
\Pi_n^{SU(2)}(z,w)=(n+1) (1+z\bar w)^n.
$$
By the expression of $K_n(z|p)$ in Theorem \ref{thm1}, the expected density of critical points is unchanged when the covariance kernel  is multiplied by a constant (or equivalently the Gaussian process is multiplied by a constant).
In the following computations, for simplicity, we can replace the Bergman kernel $\Pi^{SU(2)}_n(z,w)$ by the normalized Bergman kernel
   $$\Pi_n(z,w)=(1+z\bar w)^n.$$

     By formula \eqref{ddd},
 we have the following expression for the covariance kernel of the conditional Gaussian measure $$\Pi_n^p(z,w)=(1+z\bar w)^n -\frac{(1+z\bar p)^n (1+p\bar w)^n  }{(1+p\bar p)^n}.$$

Now let's compute the matrices $B_n$ and $C_n$ for $H^0(\CP, \mathcal O(n))$. Indeed, we have,
   $$\frac{\d \Pi_n^p}{\d z}=n(1+z\bar w)^{n-1}\bar w-\frac{n \bar p (1+z\bar p)^{n-1}(1+p\bar w)^{n }}{(1+p\bar p)^n}$$

   $$\frac{\d^2 \Pi_n^p}{\d z\d \bar w}=n(1+z\bar w)^{n-1}+zn(n-1)(1+z\bar w)^{n-2} \bar w-\frac{n^2 p\bar p (1+z\bar p)^{n-1}(1+p\bar w)^{n-1 }}{(1+p\bar p)^n}$$

   $$\frac{\d^2 \Pi_n^p}{\d^2 z}=n(n-1)(1+z\bar w)^{n-2}\bar w^2-\frac{n(n-1) \bar p^2 (1+z\bar p)^{n-2}(1+p\bar w)^{n }}{(1+p\bar p)^n}$$

    $$\frac{\d^3 \Pi_n^p}{\d^2 z\d \bar w}=2n(n-1)(1+z\bar w)^{n-2}\bar w +n(n-1)(n-2)(1+z\bar w)^{n-3}z\bar w^2-\frac{n^2(n-1) p\bar p^2 (1+z\bar p)^{n-2}(1+p\bar w)^{n-1}}{(1+p\bar p)^n}$$

    $$\frac{\d^4 \Pi_n^p}{\d^2 z\d^2 \bar w}=2n(n-1)(1+z\bar w)^{n-2}  +4n(n-1)(n-2)(1+z\bar w)^{n-3}z\bar w+n(n-1)(n-2)(n-3)(1+z\bar w)^{n-4}z^2\bar w^2$$$$ -\frac{n^2(n-1)^2 p^2\bar p^2 (1+z\bar p)^{n-2}(1+p\bar w)^{n-2}}{(1+p\bar p)^n}$$

     Throughout the article, the notation $a_n\sim b_n$
     means the asymptotics $a_n=b_n+o(b_n)$ as $n$ large enough; for simplicity we will discard the negligible terms $o(b_n)$ in some steps
     which do not contribute in the pointwise limit as $n\to\infty$ in order to keep track of the leading order term, although the precise estimates for all errors terms can be derived.

     In order to get the rescaling density $K_n(p+\frac{u}{\sqrt n}|p)$ around $p$, 
      we choose the affine coordinate at $p=0$ with $z=\frac{u}{\sqrt n}$. By Lemma \ref{eign}, we need to
     find rescaling limits of $\lambda_1(\frac{u}{\sqrt n})$ and $\lambda_2(\frac{u}{\sqrt n})$ where $\lambda_1$ and $\lambda_2$ are eigenvalues of matrix $\Lambda_nQ_n$, or equivalently, we need to find the estimates of two eigenvalues of matrix $(\Lambda_nQ_n)(\frac{u}{\sqrt n})$.

     We first have the following asymptotics as $n$ large enough,

    \begin{equation}\label{aaaaa}A_n(\frac{u}{\sqrt n})=n(1+\frac{|u|^2}n)^{n-1}+(n-1)|u|^2(1+\frac{|u|^2}n)^{n-2}\sim n(1+|u|^2)e^{|u|^2}.\end{equation}
Similarly, we have,

    $$B_n(\frac{u}{\sqrt n})\sim (2n^{\frac32}u+n^{\frac32}u|u|^2, n^{\frac12}\bar u)e^{|u|^2},$$
and
     $$C_n(\frac{u}{\sqrt n})\sim e^{|u|^2}\begin{pmatrix}  2n^2+4n^2|u|^2+n^2|u|^4&    n\bar u^2 \\   n  u^2 &1-e^{-|u|^2} \end{pmatrix}.$$

     Since $\Lambda_n= C_n-A_n^{-1}B_n^*  B_n $, it's easy to compute

    $$(\Lambda_nQ_n)(\frac{u}{\sqrt n})\sim \frac{\ n^2e^{|u|^2}}{1+|u|^2}\times \begin{pmatrix}2+2|u|^2+|u|^4&0\\
     0& -1+e^{-|u|^2}+|u|^2e^{-|u|^2}\end{pmatrix}. $$

        Hence, the eigenvalues of $(\Lambda_nQ_n)(\frac{u}{\sqrt n})$ satisfy asymptotics,    \begin{equation}\label{aaat}\lambda_1(\frac{u}{\sqrt n})\sim \frac{\ n^2(2+2|u|^2+|u|^4)e^{|u|^2}}{1+|u|^2}>0\end{equation} and    \begin{equation}\label{att}\lambda_2(\frac{u}{\sqrt n})\sim \frac{\ n^2(-1+e^{-|u|^2}+|u|^2e^{-|u|^2})e^{|u|^2}}{1+|u|^2}\leq 0.\end{equation}

For the Fubini-Study metric, we have the following estimate,  \begin{equation}\label{fubini}
\lim_{n\to\infty}n\omega_{FS}(\frac{u}{\sqrt n})=\lim_{n\to\infty}n\frac{\sqrt{-1}}2\frac{d\frac{u}{\sqrt n}\wedge d\frac{\bar u}{\sqrt n}}{(1+|\frac{u}{\sqrt n}|^2)^2}=\frac{\sqrt{-1}}{2}du\wedge d\bar u.\end{equation}

As a remark, this estimate is true for any \kahler metric $\omega$ by \eqref{adapted}, i.e.,
\begin{equation}\lim_{n\to\infty}n\omega(p+\frac{u}{\sqrt n})=\frac{\sqrt{-1}}{2}du\wedge d\bar u.\end{equation}
If we combine Lemma \ref{eign} with asymptotics \eqref{aaaaa}\eqref{aaat}\eqref{att}\eqref{fubini}, we have the limit,
$$
\lim_{n\to\infty} K_n(p+\frac{u}{\sqrt n}|p)=\frac{1}{\pi a_\infty^2}\frac{(\lambda_1^\infty)^2+(\lambda_2^\infty)^2}{|\lambda_1^\infty|+|\lambda_2^\infty|}\frac{\sqrt{-1}}{2}du\wedge d\bar u
$$
with
$a_\infty$, $\lambda_1^\infty$
and $\lambda_2^\infty$ given in Theorem \ref{main}.
 Hence we prove Theorem \ref{main} for Gaussian random $SU(2)$ polynomials.

\begin{rem}  In \cite{SZZh}, the authors studied the rescaling limit of the expected density of zeros given that the random sections vanish at a point. The expected (conditional) density of zeros of Gaussian random holomorphic functions can be derived by the probabilistic Poincar\'{e}-Lelong formula (see \S \ref{profooo}). In fact, for Gaussian random  $SU(2)$ polynomials, we have the following explicit global expression,
\begin{align*}&\e_{d\gamma_{d_n}}^{SU(2)} (\sum_{z:\,s_n(z)=0}\delta_z|s_n(p)=0)\\&=\frac{\sqrt{-1}}{2\pi}\d\dbar\log \Pi_n^p(z,w)\\ &=\frac{\sqrt{-1}}{2\pi}\d\dbar\log \left((1+z\bar w)^n -\frac{(1+z\bar p)^n (1+p\bar w)^n  }{(1+p\bar p)^n}\right).\end{align*}
Hence, the rescaling limit of the above density by choosing the affine coordinate at $p=0$ with $z=\frac u{\sqrt n}$ will be
$$\frac{\sqrt{-1}}{2\pi}\d\dbar \log (e^{|u|^2}-1),$$
which is one of the main results Corollary 1.3 of \cite{SZZh}.

\end{rem}




\section{Proof of Theorem \ref{main}}\label{profooo}
In this section, we will prove Theorem \ref{main} for any Riemann surfaces.

By Lemma \ref{pz}, the joint density $p^n_z$ only depends on
the Bergman kernel and its derivatives up to order $4$; thus the rescaling limit
of the conditional expectation should only depend on the rescaling limits
of Bergman kernel and its derivatives.
We will see that all these rescaling limits are universal.

The Bergman kernel has the following  Tian-Yau-Zelditch $C^\infty$-expansion on the diagonal for
Riemann surfaces \cite{MM1, T,Y,Ze2},
\begin{equation}\label{full}
\Pi_n(z,z)=ne^{n\phi}(1+a_1(z)n^{-1}+a_2(z)n^{-2}+\cdots),
\end{equation}
where $a_1$ is the scalar curvature of $\omega$.

Integrating over $M$ with respect to $e^{-n\phi}\omega$
 gives
the well-known dimension polynomial,
 \begin{equation}
 d_n=n(1+n^{-1}\int_M a_1 \omega+n^{-2}\int_M a_2\omega+\cdots ).
 \end{equation}

The proof of the full expansion \eqref{full} makes use of Boutet de Monvel-Sjostrand parametrix
construction \cite{Ze2}. Actually the same construction can be carried out to derive the recaling limits of the Bergman kernel off diagonal. We also remark that the estimates of the Bergman kernel off diagonal are studied by Dai-Liu-Ma \cite{DLM}.

Let's choose the \kahler normal coordinate at $p$. First, if we apply identities \eqref{adapted}, we have the following on diagonal asymptotics at $p$,
\begin{equation}\label{p}
\Pi_n(p,p)=n(1+a_1(p)n^{-1}+a_2(p)n^{-2}+\cdots).
\end{equation}

Regarding the rescaling limit of the Bergman kernel, we have the following universal limit,
 \begin{equation}\label{offdiag}
 \Pi_n(p+\frac{u}{\sqrt n}, p+\frac{v}{\sqrt n})=n(e^{u \cdot {\bar v}}+\frac{1}{\sqrt n}p_1+...),
 \end{equation}
 where  $p_1$ is a homogeneous polynomial and the error terms have precise estimates (see \S 5 in \cite{SZ1}).
\begin{rem}\label{bgj}
The term $ne^{u\cdot \bar v}$ is actually the rescaling limit of the Bergman kernel for the Bargmann-Fock space. We refer to \cite{BSZ1} for more details.
\end{rem}





Regarding the rescaling limit of the Bergman kernel and its derivatives on the diagonal, we have the following estimates (we refer \cite{B, BSZ1} for more details),

  \begin{align*}\Pi_n(\frac{u}{\sqrt n},\frac{u}{\sqrt n})&=ne^{|u|^2}+O(n^{\frac12}),\\
(\d\Pi_n)(\frac{u}{\sqrt n},\frac{u}{\sqrt n})&=n^{\frac 32}\bar ue^{|u|^2}+O(n),\\
(\d^2\Pi_n)(\frac{u}{\sqrt n},\frac{u}{\sqrt n})&=n^{2}\bar u^2e^{|u|^2}+O(n^{\frac32}),\\
(\d\dbar\Pi_n)(\frac{u}{\sqrt n},\frac{u}{\sqrt n})&=n^2e^{|u|^2}(1+|u|^2)+O(n^{\frac 32}),\\
(\d^2\dbar \Pi_n)(\frac{u}{\sqrt n},\frac{u}{\sqrt n})&=n^{\frac 52}\bar ue^{|u|^2}(2+|u|^2)+O(n^2), \\
(\d^2\dbar^2 \Pi_n)(\frac{u}{\sqrt n},\frac{u}{\sqrt n})&=n^{3}e^{|u|^2}(2+4|u|^2+|u|^4)+O(n^{\frac52}).   \end{align*}
And similarly, we have,
  \begin{align*}\Pi_n(\frac{u}{\sqrt n},0)&=n+O(n^{\frac12}),\,\,\,
(\d\Pi_n)(\frac{u}{\sqrt n},0)=O(n),\,\,\,
(\d^2\Pi_n)(\frac{u}{\sqrt n},0)=O(n^{\frac 32}).   \end{align*}


Now we can get the estimates of the covariance matrix, 
  \begin{align*}A_n(\frac{u}{
  \sqrt n})=&\d_{ z}\d _{\bar w}\Pi_n^p(z,w)|_{z=w=\frac{u}{\sqrt n},0}\\
 =& \d_{  z}\d _{\bar w}\Pi_n (z,w)|_{z=w=\frac{u}{\sqrt n}}-\frac{\d_z \Pi(z,p)\overline{\d_w \Pi(w,p)}}{\Pi_n (p,p)}|_{z=w=\frac{u}{\sqrt n},p=0}\\=&n^2e^{|u|^2}(1+|u|^2)+O(n^{\frac 32})-\frac{O(n^2)}{n(1+a_1(p)n^{-1}+O(n^{-2}))}\\=& n^2(1+|u|^2)e^{|u|^2}+O(n^{\frac32})
  \end{align*}

The similar computations yield,
$$B_n(\frac{u}{\sqrt u})= \left(n^{\frac52} u(2+|u|^2)e^{|u|^2}+O(n^{2}), n^{\frac{3}{2}}   \bar ue^{|u|^2}+O(n^{ })\right) $$
and
 $$C_n(\frac{u}{\sqrt u})=\begin{pmatrix}  n^3(2+4|u|^2+|u|^4)e^{|u|^2}+O(n^{\frac52})&    n^2\bar u^2 e^{|u|^2}+O(n^{\frac32})\\ n^2 u^2 e^{|u|^2}+O(n^{\frac32})&n(e^{|u|^2}-1)+O(n^{\frac12}) \end{pmatrix}.$$
Note that the above estimates are the same as the ones in \S \ref{su2} (except an extra factor $n$ since we used normalized Bergman kernel in \S \ref{su2}),
hence Theorem \ref{main} follows the same computations as in  \S \ref{su2} by finding the estimates of the eigenvalues of $( \Lambda_nQ_n)(\frac u{\sqrt n})$.





\section{Proofs of Theorem \ref{thm2}}\label{proofoooo}
In this section, we will apply the probabilistic Poincar\'{e}-Lelong formula to derive a global formula for $D_n(z|q)$ of the empirical measure of zeros with a conditioning critical point.

\subsection{Poincar\'{e}-Lelong formula}
Given a global holomorphic sections $s_n$ of  a positive holomorphic line bundle over \kahler manifolds, we denote $Z_{s_n}$ as the empirical measure of zeros of $s_n$. We write locally $s_n=f_ne^{\otimes n}$, then the classical Poincar\'{e}-Lelong formula states that \cite{GH}
\begin{equation}Z_{s_n}=\frac{\sqrt{-1}}{2 \pi}\partial\bar\partial \log |f_n|^2.\end{equation}

Taking the expectation on both sides, we have the following  probabilistic Poincar\'{e}-Lelong formula \cite{HKPV, SZZh}: Let $S\subset \hn$ be a Gaussian random field with covariance kernel $\Pi_S(z,w)$, then
\begin{equation}\label{pb}\e_S(Z_{s_n})=\frac{\sqrt{-1}}{2 \pi}\partial\bar\partial \log |\Pi_S(z,z)|.\end{equation}


\subsection{Proof of Theorem \ref{thm2}}\label{proofof2}
Now we turn to prove Theorem \ref{thm2}. Recall \eqref{alternative1}, we rewrite the conditional expectation of zeros $D_n(z|q)$ as

\begin{equation}D_n(z|q)=  \e_{\left(\hnq, d\gamma^q_{d_n-1}\right)}\left( Z_{s_n}\right). \end{equation}

  By probabilistic Poincar\'{e}-Lelong formula \eqref{pb}, we have,

\begin{equation}D_n(z|q)=  \e_{\left(\hnq, d\gamma^q_{d_n-1}\right)}\left( Z_{s_n}\right)=\frac{\sqrt{-1}}{2 \pi}\partial\bar\partial \log |\Pi_n^q(z,z)| . \end{equation}
where $\Pi_n^q(z,w)$ is the covariance kernel of the conditional Gaussian random sections $(\hnq, d\gamma^q_{d_n-1})$,
where $\hnq$ is the kernel of the linear map $s_n\to \nabla_{h^n} s_n$ \eqref{linearmap2}.

 The Kodaira embedding implies that $\hnq$ is a subspace of $\hn$ of codimension $1$.
 Let $\{s_{n,1}^q, ..., s_{n,d_n-1}^q\}$
 be an orthonormal basis of  $\hnq$ with respect to the inner product \eqref{innera}. Such basis satisfies $\nabla_{h^n} s_{n,j}(q)=0$ for all $j=1,\cdots,d_n-1$.
  We can extend this basis to be a basis of $\hn$, we denote such basis as $\{s_{n,1}^p, ..., s_{n,d_n-1}^p, \Psi_n^q\}$. By relation \eqref{relation} again, the covariance kernel for the conditional Gaussian measure  $(\hnq, d\gamma^q_{d_n-1})$ is
 \begin{equation}
 \Pi_n^q(z,w)=\Pi_n(z,w)-\Psi_n^q(z)\overline{\Psi_n^q(w)}.
 \end{equation}
 And hence,
\begin{equation}\label{ddddd}D_n(z|q)=\frac{\sqrt{-1}}{2 \pi}\partial\bar\partial \log |\Pi_n(z,z)-|\Psi_n^q(z)|^2| . \end{equation}

   To prove Theorem \ref{thm2}, it is enough to find the expression of $|\Psi_n^q|^2$. In the following computations, we will discard the local frames $e^{\otimes n}$ and $dz$ for simplicity. We write $s^q_{n,j}=f_{n,j}^q e^{\otimes n}$ locally. Then the Bergman kernel reads

  $$\Pi_n(z,w)=\sum_{j=1}^{d_n-1} f^q_{n,j}(z)\overline{f^q_{n,j}(w)}+\Psi_n^q(z)\overline{\Psi_n^q(w)}.$$
We take the Chern connection $\nabla^z_{h^n}$ on both sides with respect to variable $z$ and evaluate at $z=q$,
we have the relation,
  $$\nabla_{h^n}^z\Pi_n(z,w)|_{z=q}=\nabla_{h^n}^z\Psi_n^q(z)|_{z=q}\overline{\Psi_n^q(w)}.$$
This implies that $\Psi_n^q(w)$ is parallel to $\overline{\nabla_{h^n}^z\Pi_n(z,w)|_{z=q}}$. We define $$\Psi_n^q(w)=\lambda_q \overline{\nabla_{h^n}^z\Pi_n(z,w)|_{z=q}}.$$
We will find $|\lambda_q|^2$  in order to get  $|\Psi_n^q(w)|^2$ in \eqref{ddddd}.
By definition of the Chern connection \eqref{dddddk}, we have
 $$\Psi_n^q(w)=\lambda_q \overline{[\frac{\partial\Pi_n(z,w)}{\partial z}|_{z=q}-n\frac{\partial\phi}{\partial z}(q)\Pi_n(q,w)]}.$$
We can choose the
\kahler normal coordinate freezing at the point $z_0=q$ as the origin of the coordinate patch to simplify our computations. Recall equation \eqref{adapted}, at the origin of the \kahler normal coordinate, we have $\frac{\partial \phi}{\partial z}(q)=0$, and hence locally,

 \begin{equation}\label{phidd}\Psi_n^q(w)=\lambda_q \overline{\frac{\partial\Pi_n(z,w)}{\partial z}|}_{z=q}.\end{equation}
 We can further rewrite $\Psi_n^q(w)$ as follows: choose any orthonormal basis  $\{\psi_{n,1},\cdots,\psi_{n,d_n}\}$ of $\hn$ with respect to the inner product \eqref{innera} (or \eqref{innera2}), then the Bergman kernel is $$\Pi_n(z,w)=\sum _{j=1}^{d_n}\psi_{n,j}(z)\overline{\psi_{n,j}(w)},$$thus,
$$\Psi_n^q(w)=\lambda_q \sum _{j=1}^{d_n}\overline{\frac{\partial\psi_{n,j}}{\partial z}(q)}\psi_{n,j}(w).$$

 Note that
the $L^2$-norm of $\Psi_n^q(w)$ is $1$ by assumption,  hence, 

$$\begin{aligned} 1&=\|\Psi_n^q(w)\|_{h^n}^2=\|\lambda_q \sum _{j=1}^{d_n}\overline{\frac{\partial\psi_{n,j}}{\partial z}(q)}\psi_{n,j}(w)\|^2_{h^n}\\ &=|\lambda_q|^2\sum_{j=1}^{d_n}\frac{\partial\psi_{n,j}}{\partial z}(q)\overline{\frac{\partial\psi_{n,j}}{\partial z}(q)}=|\lambda_q|^2 (\partial_z \partial
_{\bar z}\Pi_n)(q,q)\end{aligned}.$$
Hence, combining the expression of $|\lambda_q|^2$ with \eqref{phidd}, we have,
$$
|\Psi_n^q(w)|^2=\frac{|\partial_z\Pi_n(z,w)|_{z=q}|^2}{(\partial_z\partial
_{\bar z}\Pi_n)(q,q)}.
$$
Note that $\overline{\Pi_n(z,w)}=\Pi_n(w,z)$, thus $|\partial_z\Pi_n(z,w)|_{z=q}|^2=|  \partial_{\bar z}\overline{\Pi_n(z,w)}|_{z=q}|^2=|  \partial_{\bar z}\Pi_n(w,z)|_{z=q}|^2=|\partial_{\bar w}\Pi_n(z,w)|_{w=q}|^2$, thus,
\begin{equation}\label{ddk}
|\Psi_n^q(w)|^2=\frac{|\partial_{\bar w}\Pi_n(z,w)|_{w=q}|^2}{(\partial_z\partial
_{\bar z}\Pi_n)(q,q)|}.
\end{equation}

Now we complete the proof of Theorem \ref{thm2} if we combine \eqref{ddddd}\eqref{ddk}.

\section{Proof of Theorem \ref{main2}}
\subsection{Gaussian random $SU(2)$ polynomials}
In this subsection, let's compute the rescaling limit $D_\infty(z|0)$ for Gaussian random $SU(2)$ polynomials.

We choose the affine coordinate at $q=0$. 
 As in \S \ref{su2}, we still use the normalized Bergman kernel
   $$\Pi_n(z,w)=(1+z\bar w)^n.$$
Thus we have,

$$(\d_{\bar w}\Pi_n)(z,0)=nz,\,\,\,\, (\d_z \d_{\bar z} \Pi_n)(0,0)=n.$$
Thus we have the following exact formula for the $SU(2)$ polynomials,

$$D^{SU(2)}_n(z|0)=\frac{\sqrt{-1}}{2\pi}\partial\bar\partial \log \left((1+|z|^2)^{n}-n|z|^2 \right).$$

We expand the right hand side to get,
$$D^{SU(2)}_n(z|0)=\frac{1}{\pi}[\frac {n\left((1+|z|^2)^{n-1}-1+(n-1)(1+|z|^2)^{n-2}|z|^2\right)}{(1+|z|^2)^n-n|z|^2}$$$$-\frac{n^2((1+|z|^2)^{n-1}-1)^2|z|^2}{((1+|z|^2)^n-n|z|^2)^2 }] \frac{\sqrt{-1}}{2}dz\wedge d\bar z.$$

Now we rescale $z\to \frac{z}{\sqrt n}$ to get the limit,
  \begin{align*}D_\infty^{SU(2)}(z|0):&=\lim_{n\to\infty} D_{n}^{SU(2)}(q+\frac{z}{\sqrt n}|q)\\&=\frac{1}{\pi}\left[\frac {\left(e^{|z|^2}-1+e^{|z|^2}|z|^2\right)}{e^{|z|^2}-|z|^2}-\frac{(e^{|z|^2}-1)^2|z|^2}{(e^{|z|^2}-|z|^2)^2}\right] \frac{\sqrt{-1}}{2}dz\wedge d\bar z, \end{align*}

which can be rewritten as

 $$D_\infty^{SU(2)}(z|0)=\frac{\sqrt{-1}}{2 \pi}\partial\bar\partial \log \left(e^{|z|^2}-|z|^2\right).$$
This proves Theorem \ref{main2} for Gaussian random $SU(2)$ polynomials.

\subsection{Proof of Theorem \ref{main2}}
Let's turn to prove Theorem \ref{main2} for the general cases.

 We have to apply the similar estimates about the Bergman kernel as in \S \ref{profooo}. We continue to use the \kahler normal coordinate with the origin at $q=0$ as in \S \ref{proofoooo}.

By Theorem \ref{thm2}, the rescaling limit of $D_n(z|q)$ is given as

$$D_{n}(\frac{v}{\sqrt n}|0)=\frac{\sqrt{-1}}{2 \pi}\partial\bar \partial \log \left|\Pi_n(\frac v{\sqrt n},\frac v{\sqrt n})-\frac{|(\partial_{\bar w}\Pi_n)(\frac v{\sqrt n},0)|^2}{(\partial_z\partial
_{\bar z}\Pi_n)(0,0)}\right |.$$

 Theorem \ref{main2} follows once we find the rescaling limits of $\Pi_n(\frac v{\sqrt n},\frac v{\sqrt n})$ and $(\partial_{\bar w}\Pi_n)(\frac v{\sqrt n},0)$ and the estimate of $(\partial_z\partial
_{\bar z}\Pi_n)(0,0)$.

Let's recall the rescaling limit of the Bergman kernel off the diagonal,
$$ \Pi_n(q+\frac{v}{\sqrt n}, q+\frac{u}{\sqrt n})=n(e^{v \cdot {\bar u}}+\frac{1}{\sqrt n}p_1+...).$$

  As in \S\ref{profooo}, we first have the following asymptotics at $q=0$,\begin{equation}\label{eq1}\Pi_n(\frac v{\sqrt n},\frac v{\sqrt n})\sim ne^{|v|^2},\,\,\,(\partial_{\bar w}\Pi_n)(\frac v{\sqrt n},0)\sim n^{\frac 32}v. \end{equation}

Let's recall the $C^\infty$-expansion of the Bergman kernel on the diagonal,
$$
\Pi_n(z,z)=ne^{n\phi}(1+a_1(z)n^{-1}+a_2(z)n^{-2}+\cdots).
$$
We take $\d\dbar$ on both sides to get the full expansion,
$$(\partial_z\partial
_{\bar z}\Pi_n)(z,z)=n^3e^{n\phi}|\d \phi|^2(1+a_1n^{-1}+\cdots)+n^2e^{n\phi}\d\dbar \phi(1+a_1n^{-1}+\cdots)$$
$$+2n^2e^{n\phi}\Re(\d \phi(\d\bar a_1n^{-1}+\cdots))+ne^{n\phi}(\d\dbar a_1n^{-1}+\cdots).$$

Using identities \eqref{adapted} at the origin of the \kahler normal coordinate, we have,
\begin{equation}\label{eq2}(\partial_z\partial
_{\bar z}\Pi_n)(0,0)=n^2+na_1+(\d\dbar a_1+a_2)+O(n^{-1}).\end{equation}
 If we combine the asymptotics \eqref{eq1}\eqref{eq2}, we have the universal limit
 $$D_\infty(v|0):=\lim_{n\to\infty} D_{n}(\frac{v}{\sqrt n}|0)=\frac{\sqrt{-1}}{2 \pi}\partial\bar\partial \log \left(e^{|v|^2}-|v|^2\right),$$
 which completes the proof of Theorem \ref{main2}.


\end{document}